\documentclass[10pt,a4paper]{article}

\usepackage{amsmath,amssymb,amsfonts,amsthm}
\usepackage{mathrsfs}
\usepackage{graphicx}
\usepackage{booktabs}
\usepackage{multirow}
\usepackage{xcolor}
\usepackage{enumerate}
\usepackage{appendix}
\usepackage{hyperref}

\newtheorem{theorem}{Theorem}[section]

\newtheorem{lemma}[theorem]{Lemma}

\theoremstyle{definition}
\newtheorem{definition}[theorem]{Definition}
\newtheorem{example}[theorem]{Example}

\theoremstyle{remark}
\newtheorem{remark}[theorem]{Remark}

\title{\bfseries
B.-Y. Chen inequalities for Riemannian submersion and their
applications
}

\author{Ravindra Singh, Mukut Mani Tripathi }

\date{}

\begin{document}
\maketitle

%------------------------------------------------
\begin{abstract}
\noindent In this paper, we introduce B.-Y. Chen inequalities for Riemannian
submersions between Riemannian manifolds. We derive these inequalities for
vertical, horizontal, and mixed distributions, establishing relationships
between intrinsic invariants and extrinsic invariants. We also investigate
the corresponding equality cases. As applications, the results are obtained
for submersions whose total space is a real, complex, generalized Sasakian
space form. Several examples are provided to illustrate both equality and
strict inequality cases.
\end{abstract}

\medskip
\noindent\textbf{Keywords:}
Riemannian manifolds, space forms, sectional curvature, scalar curvature, Riemannian submersions

\medskip
\noindent\textbf{MSC (2020):}
53B20, 53B35, 53C15, 53D15.

\section{Introduction}\label{sec1}
According to Nash's immersion theorem \cite{Nash_56_AM}, every $n$%
-dimensional Riemannian manifold admits an isometric immersion into the
Euclidean space ${\Bbb E}^{n\left( n+1\right) \left( 3n+11\right) /2}$.
Thus, any Riemannian manifold can be regarded as a submanifold of a
Euclidean space, which provides a natural motivation for the study of
submanifolds of Riemannian manifolds. One of the fundamental problems in
submanifold theory is to establish relationships between the main intrinsic
invariants (scalar and sectional curvatures) and the main extrinsic
invariants (squared mean curvature) of a submanifold. In this direction,
classical results such as the Gauss--Bonnet theorem, the isoperimetric
inequality, and the Chern--Lashof theorem provide important relation between
intrinsic and extrinsic invariants for submanifolds in Euclidean spaces.

In \cite{Chen_93_AM}, B.-Y. Chen established a sharp inequality for
submanifolds of real space forms involving main intrinsic invariants and the
main extrinsic invariant. He also obtained the corresponding inequality for
submanifolds of complex space forms \cite{Chen_96_AM}. Hong {\it et al.} 
\cite{Hong_Matsumoto_Tripathi_05_SUTJM} extended this inequality to
submanifolds of Riemannian manifolds. This inequality has been extensively
studied in the survey article \cite{Chen_23_Math} and the references cited
therein. \newline
O'Neill \cite{Neill_66_MMJ} introduced Riemannian submersions, which provide
a natural framework for relating the geometry of two Riemannian manifolds
and have important applications in differential geometry. Motivated by
Chen's inequalities for submanifolds, several authors have established
analogous results for Riemannian submersions by relating intrinsic
invariants with extrinsic invariants induced by the submersion. In
particular, inequalities involving vertical, horizontal, and mixed
(horizontal and vertical) distributions, together with their equality cases,
have attracted considerable attention. In this paper, we derive B.-Y. Chen's
inequalities for Riemannian submersions between Riemannian manifolds. These
inequalities are established for vertical, horizontal, and mixed
distributions, and the corresponding equality cases are thoroughly
investigated. As applications, we obtain results for Riemannian submersions
whose total space is a real space form, a complex space form, and a
generalized Sasakian space form. Furthermore, explicit examples are
constructed to demonstrate both the equality and strict inequality cases. 
\newline
This paper is organized as follows. In Section \ref{section 2}, we recall the basic definitions, preliminaries, and fundamental results concerning Riemannian submersions and curvature tensors that will be used throughout the paper. In Section \ref{section 3}, we establish a B.-Y. Chen inequality for Riemannian submersions associated with the vertical distribution and characterize the corresponding equality cases. Furthermore, we apply the obtained result to Riemannian submersions whose total space is a real space form, a complex space form, and a generalized Sasakian space form, together with suitable illustrative examples. Section \ref{section_Hor} is devoted to the derivation of the corresponding inequality related to the horizontal distribution, along with analogous applications and examples in the above geometric settings. Finally, in Section \ref{Section 4}, we derive the corresponding inequality involving mixed distributions and discuss its applications supported by illustrative examples.

\section{Preliminaries\label{section 2}}

Let $\left( M_{1},g_{1}\right) $ and $\left( M_{2},g_{2}\right) $ be two
Riemannian manifolds of dimension $n$ and $m$, respectively. A surjective
smooth map $\pi :\left( M_{1},g_{1}\right) \rightarrow \left(
M_{2},g_{2}\right) $ is called a {\em Riemannian submersion} if its
differential map $\pi _{\ast p}:T_{p}M_{1}\rightarrow T_{\pi (p)}M_{2}$ is
surjective for all $p\in M_{1}$ and $\pi _{\ast p}$ preserves the length of
all horizontal vectors at $p$ \cite{Neill_66_MMJ}. The tangent vectors to
the fibers are vertical, while the orthogonal vectors to the fibers are
horizontal. Thus $TM_{1}$, the decomposition is a direct sum of two
distributions: the vertical distribution ${\cal V}=\ker \pi _{\ast }$ and
its orthogonal complement (known as the horizontal distribution) ${\cal H}%
=\left( \ker \pi _{\ast }\right) ^{\perp }$. Similarly, for each $p\in M_{1}$
the vertical and horizontal spaces in $T_{p}M_{1}$ are denoted by ${\cal V}%
_{p}=\left( \ker \pi _{\ast }\right) _{p}$ and ${\cal H}_{p}=\left( \ker \pi
_{\ast }\right) _{p}^{\perp }$, respectively. \newline
Let $\{V_{1},\ldots ,V_{r},U_{1},\ldots ,U_{s}\}$ be an orthonormal basis of 
$T_{p}M_{1}$ such that $\{V_{1},\ldots ,V_{r}\}$ and $\{U_{1},\ldots
,U_{s}\} $ are orthonormal bases of the vertical space ${\cal V}_{p}$ and
the horizontal space ${\cal H}_{p}$, respectively. Consider the planes $\Pi =%
{\rm span}\{V_{1},V_{2}\}$ and ${\Bbb P}={\rm span}\{U_{1},U_{2}\}$. We
shall use these bases throughout this paper.

\subsection{O'Neill tensors}

The geometry of Riemannian submersions is characterized by O'Neill's tensors 
{${\cal T}$} and {${\cal A}$} defined for all vector fields $E$, $F$ on $M_{1}$
by 
\begin{eqnarray*}
{\cal T}\left( E,F\right) &=&{{\cal T}_{E}F=h\nabla _{vE}vF+v\nabla _{vE}hF},
\\
{\cal A}\left( E,F\right) &=&{\cal A}_{E}F=v{\nabla _{hE}hF+h\nabla _{hE}vF,}
\end{eqnarray*}%
where $\nabla $ is the Levi-Civita connection on $g_{1}$, $h$ and $v$ are
projection morphisms of $E$, $F\in TM_{1}$ to ${\cal H}$ and ${\cal V}$,
respectively. We also have%
\begin{eqnarray*}
{{\cal T}^{{\cal H}}}:{\cal V}\times {{\cal V}\rightarrow {\cal H}}, \\
{{\cal T}^{{\cal V}}}{:{\cal V}\times {{\cal H}\rightarrow {\cal V}}}, \\
{{\cal A}^{{\cal H}}}{:{\cal H}\times {{\cal V}\rightarrow {\cal H}}}, \\
{{\cal A}^{{\cal V}}}{:{\cal H}\times {{\cal H}\rightarrow {\cal V}}},
\end{eqnarray*}%
and 
\[
g_{1}\left( {{\cal T}^{{\cal H}}}\left( V_{i},V_{j}\right) ,U_{\alpha
}\right) =\left( {{\cal T}^{{\cal H}}}\right) _{ij}^{\alpha },\quad
g_{1}\left( {{\cal A}^{{\cal V}}}\left( U_{\alpha },U_{\beta }\right)
,V_{i}\right) =\left( {{\cal A}^{{\cal V}}}\right) _{\alpha \beta }^{i}, 
\]%
where $V_{i},V_{j}\in {\cal V}$, $U_{\alpha },U_{\beta }\in {\cal H}$. The O'Neill tensors also satisfy: 
\[
{\cal A}_{U_{i}}^{{\cal V}}U_{j}=-{\cal A}_{U_{j}}^{{\cal V}}U_{i},\quad 
{\cal T}_{V_{i}}^{{\cal H}}V_{j}={\cal T}_{V_{j}}^{{\cal H}}V_{i}, 
\]%
and 
\[
g_{1}\left( {\cal T}_{E}F,G\right) =-g_{1}\left( F,{\cal T}_{E}G\right)
,\quad g_{1}\left( {\cal A}_{E}F,G\right) =-g_{1}\left( F,{\cal A}%
_{E}G\right) , 
\]%
where $V_{i},V_{j}\in {\cal V}$, $U_{i},U_{j}\in {\cal H}$ and $E$,
\thinspace $F$, $G\in TM_{1}$ \cite{Neill_66_MMJ}. The {\it mean curvature
vector field }$H${\it \ of the fibers} of $\pi $ is defined as \cite%
{Falcitelli_04_WS} 
\begin{equation}
H(p)=\frac{1}{r}N,\quad N=\sum_{i=1}^{r}{\cal T}^{{\cal H}}\left(
V_{i},V_{i}\right) .  \label{eq-P-(9)}
\end{equation}%
The horizontal divergence of any vector field $U$ on ${\cal H}$ is given by $%
\breve{\delta}\left( U\right) $ and defined by%
\[
\breve{\delta}\left( U\right) =\sum_{j=1}^{r}\sum_{i=1}^{s}g\left( \nabla
_{U_{i}}U,U_{i}\right) . 
\]%
From \cite{Gulbahar_Meric_Kilic_17_KJM}, we have 
\begin{equation}
\breve{\delta}\left( N\right) =\sum_{j=1}^{r}\sum_{i=1}^{s}g\left( \left(
\nabla _{U_{i}}{\cal T}\right) \left( V_{j},V_{j}\right) ,U_{i}\right) .
\label{eq-P-(9.3.1)}
\end{equation}

\subsection{Relations between Riemann curvature tensor fields}

Let $R^{M_{1}}$, $R^{M_{2}}$, $R^{{\cal V}}$, and $R^{{\cal H}}$ denote the
Riemann curvature tensor fields corresponding to $M_{1}$, $M_{2}$, ${\cal V}$%
, and ${\cal H}$, respectively. Then, we have 
\begin{eqnarray}
R^{M_{1}}\left( F_{1},F_{2},F_{3},F_{4}\right) &=&R^{{\cal V}}\left(
F_{1},F_{2},F_{3},F_{4}\right) +g_{1}\left( {\cal T}_{F_{1}}F_{4},{\cal T}%
_{F_{2}}F_{3}\right)  \nonumber \\
&&-g_{1}\left( {\cal T}_{F_{2}}F_{4},{\cal T}_{F_{1}}F_{3}\right) ,
\label{eq-P-(10)}
\end{eqnarray}%
\begin{eqnarray}
R^{M_{1}}\left( X,Y,Z,W\right) &=&R^{{\cal H}}\left( X,Y,Z,W\right)
-2g_{1}\left( {\cal A}_{X}Y,{\cal A}_{Z}W\right)  \nonumber \\
&&+g_{1}\left( {\cal A}_{Y}Z,{\cal A}_{X}W\right) -g_{1}\left( {\cal A}_{X}Z,%
{\cal A}_{Y}W\right) ,  \label{eq-P-(11)}
\end{eqnarray}%
\begin{eqnarray}
R^{M_{1}}\left( X,F_{1},Y,F_{2}\right) &=&g_{1}\left( \left( \nabla _{X}%
{\cal T}\right) \left( F_{1},F_{2}\right) ,Y\right) +g_{1}\left( \left(
\nabla _{F_{1}}{\cal A}\right) \left( X,Y\right) ,F_{2}\right)  \nonumber \\
&&-g_{1}\left( {\cal T}_{F_{1}}X,{\cal T}_{F_{2}}Y\right) +g_{1}\left( {\cal %
A}_{Y}F_{2},{\cal A}_{X}F_{1}\right)  \label{eq-P-(12)}
\end{eqnarray}%
for all $X,Y,Z,W\in {\cal H}$\ and $F_{1},F_{2},F_{3},F_{4}\in {\cal V}$.
Here, $\nabla $ is the Levi-Civita connection with respect to the metric $%
g_{1}$ \cite{Falcitelli_04_WS,Neill_66_MMJ}.
 
In the sequel, we use the following notations throughout the paper. 
\begin{equation}
2\tau _{{\cal V}}^{{\cal V}}=\sum_{i,j=1}^{r}R^{{\cal V}}\left(
V_{i},V_{j},V_{j},V_{i}\right) ,\ 2\tau _{{\cal H}}^{{\cal H}%
}=\sum_{i,j=1}^{s}R^{{\cal V}}\left( U_{i},U_{j},U_{j},U_{i}\right), 
\label{eq-P-(9.1)}
\end{equation}
\begin{equation}
K_{{\cal V}}^{{\cal V}}\left( \Pi \right) =R^{{\cal V}}\left(
V_{1},V_{2},V_{2},V_{1}\right) ,\ K_{{\cal V}}^{M_{1}}\left( \Pi \right)
=R^{M_{1}}\left( V_{1},V_{2},V_{2},V_{1}\right) ,  \label{eq-P-(9.2)}
\end{equation}%
\begin{equation}
K_{{\cal H}}^{{\cal H}}\left( \Pi \right) =R^{{\cal H}}\left(
U_{1},U_{2},U_{2},U_{1}\right) ,\ K_{{\cal H}}^{M_{1}}\left( \Pi \right)
=R^{M_{1}}\left( U_{1},U_{2},U_{2},U_{1}\right) ,  \label{eq-P-(9.2.0)}
\end{equation}%
\begin{equation}
2\tau _{{\cal V}}^{M_{1}}(p)=\sum\limits_{i,j=1}^{r}R^{M_{1}}\left(
V_{i},V_{j},V_{j},V_{i}\right) ,\ 2\tau _{{\cal H}}^{M_{1}}(p)=\sum%
\limits_{i,j=1}^{s}R^{M_{1}}\left( U_{i},U_{j},U_{j},U_{i}\right) ,
\label{eq-P-(9.2.1)}
\end{equation}%
\begin{equation}
\tau ^{M_{1}}(p)=\tau _{{\cal V}}^{M_{1}}(p)+\tau _{{\cal H}%
}^{M_{1}}(p)+\sum_{i=1}^{s}\sum_{j=1}^{r}R^{M_{1}}\left(
U_{i},V_{j},V_{j},U_{i}\right) ,  \label{eq-scal-M1}
\end{equation}%
\begin{equation}
\left\Vert {\cal T}^{{\cal V}}\right\Vert
^{2}=\sum_{j=1}^{r}\sum_{i=1}^{s}g\left( {\cal T}_{V_{j}}^{{\cal V}}U_{i},%
{\cal T}_{V_{j}}^{{\cal V}}U_{i}\right) ,\ \left\Vert {\cal T}^{{\cal H}%
}\right\Vert ^{2}=\sum_{i,j=1}^{r}g\left( {\cal T}_{V_{i}}^{{\cal V}}V_{j},%
{\cal T}_{V_{i}}^{{\cal V}}V_{j}\right),   \label{eq-P-(14)}
\end{equation}%
\begin{equation}
\left\Vert {\cal A}^{{\cal V}}\right\Vert ^{2}=\sum_{i,j=1}^{s}g\left(
A_{U_{i}}^{{\cal V}}U_{j},A_{U_{i}}^{{\cal V}}U_{j}\right) ,\quad \left\Vert 
{\cal A}^{{\cal H}}\right\Vert ^{2}=\sum_{j=1}^{r}\sum_{i=1}^{s}g\left(
A_{U_{i}}^{{\cal H}}V_{j},A_{U_{i}}^{{\cal H}}V_{j}\right) ,
\label{eq-P-(14.1)}
\end{equation}%
\[
\left\Vert Q\right\Vert ^{2}=\sum\limits_{i=1}^{r}\Vert QV_{i}\Vert
^{2}=\sum\limits_{i,j=1}^{r}\left( g_{2}(QV_{i},V_{j})\right) ^{2},
\]%
\[
\left\Vert P\right\Vert ^{2}=\sum\limits_{i=1}^{s}\Vert PU_{i}\Vert
^{2}=\sum\limits_{i,j=1}^{s}\left( g_{2}(PU_{i},U_{j})\right) ^{2},
\]%

\[
\left\Vert P^{{\cal V}}\right\Vert ^{2}=\sum\limits_{i=1}^{r}\Vert
PV_{i}\Vert ^{2}=\sum\limits_{i=1}^{r}\sum\limits_{j=1}^{s}\left(
g_{2}(PV_{i},U_{j})\right) ^{2}.
\]

\begin{definition}
The Kulkarni-Nomizu product $T_{1}\circledast T_{2}$ of $\left( 0,2\right) $%
-tensor fields $T_{1}$ and $T_{2}$ in a smooth manifold $M$ is a $\left(
0,4\right) $-tensor field defined by \cite{Tripathi_25_CMAMS} 
\begin{eqnarray}
\left( T_{1}\circledast T_{2}\right) \left( X,Y,Z,W\right)  &=&T_{1}\left(
Y,Z\right) T_{2}\left( X,W\right) -T_{1}\left( X,Z\right) T_{2}\left(
Y,W\right)   \nonumber \\
&&+T_{2}\left( Y,Z\right) T_{1}\left( X,W\right) -T_{2}\left( X,Z\right)
T_{1}\left( Y,W\right)   \label{eq-KN-product}
\end{eqnarray}%
for all vector fields $X$, $Y$, $Z$, $W$ on $M$.
\end{definition}
\begin{definition}
The symmetric product of any two $\left( 0,2\right) $-tensor fields $T_{1}$
and $T_{2}$ is a $\left( 0,4\right) $-tensor field $T_{1}\circledcirc T_{2}$
defined by \cite{Tripathi_25_CMAMS} 
\begin{equation}
T_{1}\circledcirc T_{2}=T_{1}\circledast T_{2}+T_{2}\circledast T_{1}.
\label{eq-KN-symm-product}
\end{equation}
\end{definition}
Now, let $\left( M,g\right) $ be an $n$-dimensional Riemannian manifold. Let 
$T$ be a Kulkarni-Nomizu tensor field so that it satisfies%
\begin{equation}
T\left( X,Y,Z,W\right) =-T\left( Y,X,Z,W\right) ,  \label{eq-KN-T-1}
\end{equation}%
\begin{equation}
T\left( X,Y,Z,W\right) =-T\left( X,Y,W,Z\right) ,  \label{eq-KN-T-2}
\end{equation}%
\begin{equation}
T\left( X,Y,Z,W\right) =T\left( Z,W,X,Y\right) ,  \label{eq-KN-T-3}
\end{equation}%
\begin{equation}
T\left( X,Y,Z,W\right) +T\left( Y,Z,X,W\right) +T\left( Z,X,Y,W\right) =0,
\label{eq-KN-T-4}
\end{equation}%
\begin{equation}
T\left( X,Y,Z,W\right) +T\left( X,Z,W,Y\right) +T\left( X,W,Y,Z\right) =0
\label{eq-KN-T-5}
\end{equation}%
for all vector fields $X$, $Y$, $Z$ and $W$ on $M$. It is observed that if $%
T $ satisfies any two of the three conditions (\ref{eq-KN-T-1}), (\ref%
{eq-KN-T-2}), (\ref{eq-KN-T-3}) and any one of the two conditions (\ref%
{eq-KN-T-4}), (\ref{eq-KN-T-5}), then it also satisfies the remainning two
relations.
\begin{definition}
{\rm (\cite[Kobayashi and Nomizu 1963, p. 209]{Kobayashi_Nomizu_1963}, \cite[%
Takahashi 1972]{Takahashi_72_KJSM})} A Riemannian manifold $\left(
M,g\right) $ with constant sectional curvature $c$ is called a real space
form, and its Riemann-Christoffel curvature tensor field $R^{M}$ is given by 
{\rm \cite[Tripathi, 2026, p. 29]{Tripathi_2026}} % \begin{equation}
% R^{M}(X,Y)Z=c\{g(Y,Z)
% X-g(X,Z)Y\}  \label{eq-RSF}
% \end{equation}
% for all vector fields $X,Y,Z\in \Gamma (TM)$.
% Using Kulkarni-Nomizu product given by () and the symmetric product given by (), the expression () can be written as
\begin{equation}
R^{M}=\frac{c}{2}\left( g\circledast g\right).  \label{eq-RSF}
\end{equation}
\end{definition}
\begin{definition}
{\rm \cite[Ogiue 1972]{Ogiue_72_JMSJ}} Let $M$ be an almost Hermitian
manifold with an almost Hermitian structure $\left( J,g\right) $. Then $M$
becomes a K\"{a}hler manifold if $\nabla J=0$. A k\"{a}hler manifold with
constant holomorphic sectional curvature $c$ is called a complex space form $%
M\left( c\right) $, and its Riemann-Christoffel curvature tensor field is
given by {\rm \cite[Tripathi, 2026 p. 79]{Tripathi_2026}} % \begin{align}
\begin{equation}
R^{M}=\frac{c}{8}\left( g\circledast g\right) +\frac{c}{4}\left\{ \frac{1}{2}%
\left( J^{b}\circledast J^{b}\right) -\left( J^{b}\circledcirc J^{b}\right)
\right\},  \label{eq-GCSF}
\end{equation}
where $J^{\flat}$ denotes the $(0,2)$-tensor field associated with the $%
(1,1) $-tensor field $J$, defined by 
\[
J^{\flat}(X,Y)=g(X,JY) 
\]
for all vector fields $X$, $Y$ on $M$. Moreover, for any vector field $X$ on 
$M$, we write 
\begin{equation}
JX=PX+QX,  \label{decompose_GCSF_RM}
\end{equation}%
where $PX\in {\cal H}$, $QX\in {\cal V}$.
\end{definition}
\begin{definition}
{\rm (\cite[Alegre et al. 2004]{Alegre_Blair_Carriazo_04_IJM}, \cite[Blair
2010]{Blair_10})} An almost contact metric manifold $M$ equipped with an
almost contact metric structure $\left( \varphi ,\xi ,\eta ,g\right) $ is
called a generalized Sasakian space form, denoted by $M\left(
c_{1},c_{2},c_{3}\right) $, if its Riemann-Christoffel curvature tensor
field $R^{M}$ satisfies {\rm \cite[Tripathi, 2026, p. 115]{Tripathi_2026}} 
\begin{equation}
R^{M}=\frac{1}{2}c_{1}\left( g\circledast g\right) +c_{2}\left\{ \frac{1}{2}%
\left( \varphi ^{b}\circledast \varphi ^{b}\right) -\left( \varphi
^{b}\circledcirc \varphi ^{b}\right) \right\} -c_{3}\left( g\circledast
\left( \eta \otimes \eta \right) \right) ,  \label{eq-GSSF}
\end{equation}%
where $\varphi ^{\flat }$ denotes the $(0,2)$-tensor field associated with
the $(1,1)$-tensor field $\varphi $, defined by 
\[
\varphi ^{\flat }(X,Y)=g(X,\varphi Y)
\]%
for all vector fields $X,Y$ on $M$ \noindent Moreover, for any vector field $%
X$on $M$, we write 
\begin{equation}
\varphi X=PX+QX,  \label{decompose_GSSF_RM}
\end{equation}%
where $PX\in {\cal H}$, $QX\in {\cal V}$.
\end{definition}
\subsection{$\protect\delta \left( 2\right) $-invariant}
B.-Y. Chen introduced a new Riemannian invariant $\delta$ (now known as $%
\delta \left( 2\right) $), defined as the difference between the scalar
curvature and the infimum of sectional curvatures \cite%
{Chen_93_AM,Chen_95_RM} 
\begin{equation}
\delta \left( 2\right) =\tau \left( p\right) -\left( \inf K\right) \left(
p\right),  \label{eq-delta-01}
\end{equation}
where 
\[
\left( \inf K\right) \left( p\right) =\inf \left\{ K\left( \Pi_{2} \right) \
|\ \Pi_{2} \subset T_{p}M,\ \dim \Pi_{2} =2\right\} . 
\]
Similarly, we define {\rm (cf.~\cite[Chen 2000]{Chen_00_JJM})} 
\begin{equation}
\hat{\delta}\left( 2\right) =\tau \left( p\right) -\left( \sup K\right)
\left( p\right),  \label{eq-delta-02}
\end{equation}
where 
\[
\left( \sup K\right) \left( p\right) =\sup \left\{ K\left( \Pi_{2} \right) \
|\ \Pi_{2} \subset T_{p}M,\ \dim \Pi_{2} =2\right\} . 
\]
\begin{lemma}
Let $\pi :(M_{1},g_{1})\rightarrow (M_{2},g_{2})$ be a Riemannian submersion
between Riemannian manifolds with $\dim M_{1}=n$ and $\dim M_{2}=m$. If $%
\dim {\cal V}_{p}=r>2$, then for $2$-planes $\Pi \subset {\cal V}_{p}$ and $%
{\Bbb P}\subset {\cal H}_{p}$, 
\begin{equation}
\delta ^{{\cal V}}\left( 2\right) =\tau _{{\cal V}}^{{\cal V}}\left(
p\right) -\left( \inf K_{{\cal V}}^{{\cal V}}\right) \left( p\right) ,
\label{eq-delta-v-01}
\end{equation}%
where 
\[
\left( \inf K_{{\cal V}}^{{\cal V}}\right) \left( p\right) =\inf \left\{ K_{%
{\cal V}}^{{\cal V}}\left( \Pi \right) \ |\ \Pi \subset {\cal V}_{p},\ \dim
\Pi =2\right\} , 
\]%
and 
\begin{equation}
\delta ^{{\cal H}}\left( 2\right) =\tau _{{\cal H}}^{({\ker \pi _{\ast p}}%
)^{\perp }}\left( p\right) -\left( \inf K_{{\cal H}}^{({\ker \pi _{\ast p}}%
)^{\perp }}\right) \left( p\right) ,  \label{eq-delta-H-02}
\end{equation}%
where 
\[
\left( \inf K_{{\cal H}}^{({\ker \pi _{\ast p}})^{\perp }}\right) \left(
p\right) =\inf \left\{ K_{{\cal H}}^{({\ker \pi _{\ast p}})^{\perp }}\left( 
{\Bbb P}\right) \ |\ {\Bbb P}\subset {\cal H}_{p},\ \dim {\Bbb P}=2\right\}
. 
\]%
Similarly, we define {\rm (cf.~\cite[Chen 2000]{Chen_00_JJM})} 
\begin{equation}
\hat{\delta}^{{\cal V}}\left( 2\right) =\tau _{{\cal V}}^{\ker \pi _{\ast
p}}\left( p\right) -\left( \sup K_{{\cal V}}^{\ker \pi _{\ast p}}\right)
\left( p\right) ,  \label{eq-delta-V-02}
\end{equation}%
where 
\[
\left( \sup K_{{\cal V}}^{\ker \pi _{\ast p}}\right) \left( p\right) =\sup
\left\{ K_{{\cal V}}^{\ker \pi _{\ast p}}\left( \Pi \right) \ |\ \Pi \subset 
{\cal V}_{p},\ \dim \Pi =2\right\} . 
\]%
and 
\begin{equation}
\hat{\delta}^{{\cal H}}\left( 2\right) =\tau _{{\cal H}}^{({\ker \pi _{\ast
p}})^{\perp }}\left( p\right) -\left( \sup K_{{\cal H}}^{({\ker \pi _{\ast p}%
})^{\perp }}\right) \left( p\right) ,  \label{eq-delta-V-03}
\end{equation}%
where 
\[
\left( \sup K_{{\cal H}}^{({\ker \pi _{\ast p}})^{\perp }}\right) \left(
p\right) =\sup \left\{ K_{{\cal H}}^{({\ker \pi _{\ast p}})^{\perp }}\left( 
{\Bbb P}\right) \ |\ {\Bbb P}\subset {\cal H}_{p},\ \dim {\Bbb P}=2\right\}
. 
\]
\end{lemma}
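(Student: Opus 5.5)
This "lemma" is really a definition: it introduces the quantities $\delta^{\cal V}(2)$, $\delta^{\cal H}(2)$, $\hat\delta^{\cal V}(2)$, $\hat\delta^{\cal H}(2)$. So the proof amounts to checking that each of them is \emph{well defined} at every point $p\in M_1$, with nothing further to derive. The plan has four steps.

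\textbf{Step 1: the scalar quantities are well defined.} I will show that $\tau^{\cal V}_{\cal V}(p)$ and $\tau^{{\cal H}}_{{\cal H}}(p)$, as given in (\ref{eq-P-(9.1)}), do not depend on the chosen orthonormal bases of ${\cal V}_p$ and ${\cal H}_p$.

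\begin{itemize}
\item $R^{\cal V}$ is the curvature of the fibres, which are Riemannian submanifolds, so $R^{\cal V}$ restricted to ${\cal V}_p$ is a genuine algebraic curvature tensor. It satisfies (\ref{eq-KN-T-1})--(\ref{eq-KN-T-4}).
\item The sum $\sum_{i,j} R^{\cal V}(V_i,V_j,V_j,V_i)$ is the full trace of this tensor over ${\cal V}_p$. It is therefore an $O(r)$-invariant contraction and independent of the basis.
\item For ${\cal H}_p$, I read the curvature in (\ref{eq-P-(9.1)}) as $R^{\cal H}$, i.e.\ the curvature of $M_2$ lifted horizontally. Via $\pi_{*p}$ this is isometric to $T_{\pi(p)}M_2$. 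The same trace argument then applies.
\end{itemize}

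\textbf{Step 2: the sectional curvatures are well defined.} I will show that $K^{\cal V}_{\cal V}(\Pi)$ and $K^{\cal H}_{\cal H}({\Bbb P})$ depend only on the planes, not on the orthonormal bases spanning them. Any two orthonormal bases of the same plane differ by an element of $O(2)$. Using antisymmetry in each pair together with the pair symmetry (\ref{eq-KN-T-3}), the value $R(e_1,e_2,e_2,e_1)$ is unchanged under such a change.

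\textbf{Step 3: the infimum and supremum are finite and attained.} The Grassmannian $G_2({\cal V}_p)$ is a compact manifold, and $\Pi\mapsto K^{\cal V}_{\cal V}(\Pi)$ is continuous on it. So the infimum and supremum are finite and are attained as a minimum and maximum. The hypothesis $r>2$ guarantees that ${\cal V}_p$ contains more than one $2$-plane. This makes $\delta^{\cal V}(2)$ a non-degenerate invariant: for $r=2$ one would get $\tau=K$, so $\delta=0$ trivially. The same argument applies to ${\cal H}_p$, assuming $s\ge 2$.

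\textbf{Step 4: assemble the definitions.} The four expressions (\ref{eq-delta-v-01})--(\ref{eq-delta-V-03}) are then well-defined real-valued functions of $p$. By the reasoning in Chen's original definitions (\ref{eq-delta-01})--(\ref{eq-delta-02}), they are Riemannian invariants of the submersion.

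\textbf{Main obstacle.} The only delicate point is notational. The statement mixes $\ker\pi_{*p}$ with ${\cal V}_p$, and $(\ker\pi_{*p})^\perp$ with ${\cal H}_p$, and (\ref{eq-P-(9.1)}) writes $R^{\cal V}$ where $R^{\cal H}$ is evidently intended. I will first fix these identifications, ${\cal V}_p=\ker\pi_{*p}$ and ${\cal H}_p=(\ker\pi_{*p})^\perp$. After that, the basis-independence checks of Steps 1 and 2 are routine.
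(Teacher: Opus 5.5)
The paper gives no proof of this statement. The ``lemma'' is only a block of definitions (the text itself says ``we define''), so you read it correctly, and there is no argument in the paper to compare yours against.

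Your well-definedness checks are correct and go beyond what the paper supplies:
\begin{itemize}
\item The trace in Step~1 is basis-independent for any $(0,4)$-tensor, not only algebraic curvature tensors. An orthogonal change of basis $(O_{ab})$ contributes $\sum_i O_{ai}O_{di}=\delta_{ad}$ and $\sum_j O_{bj}O_{cj}=\delta_{bc}$, so the curvature symmetries are not needed there.
\item In Step~2 you do not need the pair symmetry. Antisymmetry in the first pair and in the last pair already gives the factor $(\det)^2=1$.
\item Step~3 correctly uses compactness of the Grassmannian to show that the infimum and supremum are finite and attained.
\end{itemize}

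Two of your remarks are well taken. First, the $R^{\cal V}$ in the paper's definition of $\tau^{\cal H}_{\cal H}$ is indeed a misprint for $R^{\cal H}$. With your reading of $R^{\cal H}$ as the horizontal lift of the base curvature, $\tau^{\cal H}_{\cal H}(p)$ and $K^{\cal H}_{\cal H}({\Bbb P})$ are the scalar curvature of $M_2$ at $\pi(p)$ and the sectional curvature of $\pi_{*p}{\Bbb P}$. So $\delta^{\cal H}(2)$ is simply Chen's invariant of the base, evaluated at $\pi(p)$, which makes its well-definedness immediate. Second, the horizontal quantities need $s\ge 2$, a hypothesis the statement omits since it assumes only $r>2$.
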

\begin{lemma}
{\rm \cite[Lemma 3.1, Chen 1993]{Chen_93_AM}} \label{Lemma 2}If $k>2$ and $%
a_{1},\ldots ,a_{k},b$ are real numbers such that 
\begin{equation}
\left( \sum_{i=1}^{k}a_{i}\right) ^{2}=\left( k-1\right) \left(
\sum_{i=1}^{k}a_{i}^{2}+b\right)   \label{eq-P-(13)}
\end{equation}
then 
\[
2a_{1}a_{2}\geq b. 
\]
The equality holds if and only if $a_{1}+a_{2}=a_{3}=\cdots =a_{k}$.\label{sec 3}
\end{lemma}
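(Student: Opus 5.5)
The statement is Chen's algebraic Lemma~\ref{Lemma 2}: if $k>2$ and $\left(\sum a_i\right)^2=(k-1)\left(\sum a_i^2+b\right)$, then $2a_1a_2\ge b$, with equality if and only if $a_1+a_2=a_3=\cdots=a_k$. The plan is to merge $a_1$ and $a_2$ into a single variable, which reduces the problem to $k-1$ numbers, and then apply the Cauchy--Schwarz inequality to those $k-1$ numbers.

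Set $c_1=a_1+a_2$ and $c_j=a_{j+1}$ for $j=2,\ldots,k-1$. Then $\sum_{i=1}^k a_i=\sum_{j=1}^{k-1}c_j$. We also have $\sum_{i=1}^k a_i^2=\sum_{j=1}^{k-1}c_j^2-2a_1a_2$, since $a_1^2+a_2^2=c_1^2-2a_1a_2$. Substituting these into the hypothesis (\ref{eq-P-(13)}) gives
\[
\Big(\sum_{j=1}^{k-1}c_j\Big)^2=(k-1)\Big(\sum_{j=1}^{k-1}c_j^2+b-2a_1a_2\Big).
\]

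By Cauchy--Schwarz applied to the $k-1$ numbers $c_j$, we have $\left(\sum_{j=1}^{k-1}c_j\right)^2\le (k-1)\sum_{j=1}^{k-1}c_j^2$. Combining this with the identity above yields
\[
(k-1)\big(b-2a_1a_2\big)\le 0 .
\]
Since $k>2$, the factor $k-1$ is positive, so $b\le 2a_1a_2$, which is the claimed inequality.

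For the equality case, $2a_1a_2=b$ holds exactly when Cauchy--Schwarz is an equality. That happens precisely when all the $c_j$ are equal, that is, when $a_1+a_2=a_3=\cdots=a_k$. The converse direction follows by running the same computation backwards.

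There is no real obstacle here. The only step needing care is the bookkeeping of the cross term $2a_1a_2$ when rewriting $\sum a_i^2$ in terms of the $c_j$.
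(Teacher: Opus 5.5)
Your proof is correct and complete. The paper itself does not prove this lemma. It is quoted from Chen's 1993 paper and used only as a black box in the proof of Theorem~\ref{Theorem 1}, so there is no in-paper argument to compare yours against.

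Merging $a_1,a_2$ into $c_1=a_1+a_2$ turns the hypothesis into the exact identity
\[
(k-1)\bigl(b-2a_1a_2\bigr)=\Bigl(\sum_{j=1}^{k-1}c_j\Bigr)^{2}-(k-1)\sum_{j=1}^{k-1}c_j^{2}.
\]
Both the inequality and the equality characterization then follow from Cauchy--Schwarz and its equality case in a single step, in both directions. The equality case is that all $c_j$ are equal, i.e.\ $a_1+a_2=a_3=\cdots=a_k$.

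The lemma can also be proved by induction on $k$. Deleting $a_k$ keeps the form of the hypothesis, with $k-1$ in place of $k$ and $b$ replaced by
\[
b'=b+\frac{\bigl(\sum_{i=1}^{k-1}a_i-(k-2)a_k\bigr)^{2}}{(k-1)(k-2)}\geq b .
\]
The equality case then has to be tracked through each inductive step; your route avoids that bookkeeping.

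Your argument only uses $k-1>0$, so it also covers $k=2$, where the hypothesis reduces to $b=2a_1a_2$.
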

\section{B.-Y. Chen inequality for Riemannian submersion along vertical
distributions \label{section 3}}
We begin with the following:
\begin{theorem}
\label{Theorem 1}Let $\pi :(M_{1},g_{1})\rightarrow (M_{2},g_{2})$ be a
Riemannian submersion between two Riemannian manifolds with $\dim M_{1}=n$
and $\dim M_{2}=m$. If $\dim {\cal V}_{p}=r>2$, then for any $2$-plane $\Pi
\subset {\cal V}_{p}$, 
\begin{equation}
\tau _{{\cal V}}^{{\cal V}}(p)-K_{{\cal V}}^{{\cal V}}(\Pi )\;\geq \;\tau _{%
{\cal V}}^{M_{1}}(p)-K_{{\cal V}}^{M_{1}}(\Pi )\;-\;\frac{r^{2}(r-2)}{2(r-1)}%
\,\Vert H\Vert ^{2}.  \label{eq-GCFI-(1)}
\end{equation}%
The equality holds in {\rm (\ref{eq-GCFI-(1)})} if and only if the matrix $(%
{\cal T}^{{\cal H}})_{ij}^{\ell }$, $i,j=1,\ldots ,r$, takes the following
forms: 
\begin{eqnarray}
\left( {\cal T}^{{\cal H}}\right) _{ij}^{1} &=&\left( 
\begin{array}{ccccc}
a & 0 & 0 & 0 & 0 \\ 
0 & b & 0 & 0 & 0 \\ 
0 & 0 & a+b & 0 & 0 \\ 
0 & 0 & 0 & \ddots  & 0 \\ 
0 & 0 & 0 & 0 & a+b%
\end{array}%
\right),   \label{eq-TH-matrix-1} \\
\left( {\cal T}^{{\cal H}}\right) _{ij}^{\ell } &=&\left( 
\begin{array}{ccccc}
a_{\ell } & b_{\ell } & 0 & \cdots  & 0 \\ 
b_{\ell } & -a_{\ell } & 0 & \cdots  & 0 \\ 
0 & 0 & 0 & \cdots  & 0 \\ 
0 & 0 & 0 & \ddots  & 0 \\ 
0 & 0 & 0 & \cdots  & 0%
\end{array}%
\right) ,\quad \ell =2,\cdots ,s,  \label{eq-TH-matrix-2}
\end{eqnarray}%
where $\left( {\cal T}^{{\cal H}}\right) _{11}^{1} \equiv a$, $\left( {\cal T%
}^{{\cal H}}\right) _{22}^{1}\equiv b$, $\left( {\cal T}^{{\cal H}}\right)
_{11}^{\ell }\equiv a_{\ell }$ and $\left( {\cal T}^{{\cal H}}\right)
_{12}^{\ell }\equiv b_{\ell }$.
\end{theorem}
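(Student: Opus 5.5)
The plan is to mimic Chen's original argument, with the Gauss equation replaced by the O'Neill relation (\ref{eq-P-(10)}) restricted to the vertical distribution and the second fundamental form replaced by ${\cal T}^{{\cal H}}$. First, put $F_1=F_4=V_i$ and $F_2=F_3=V_j$ in (\ref{eq-P-(10)}) and sum over $i,j=1,\ldots ,r$. Using the symmetry ${\cal T}_{V_i}^{{\cal H}}V_j={\cal T}_{V_j}^{{\cal H}}V_i$, this should give
\[
2\tau _{{\cal V}}^{M_{1}}(p)=2\tau _{{\cal V}}^{{\cal V}}(p)+\Vert {\cal T}^{{\cal H}}\Vert ^{2}-r^{2}\Vert H\Vert ^{2},
\]
where $\Vert {\cal T}^{{\cal H}}\Vert ^{2}=\sum_{\ell=1}^{s}\sum_{i,j}\big(({\cal T}^{{\cal H}})_{ij}^{\ell }\big)^{2}$ and $rH=N$ by (\ref{eq-P-(9)}). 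I will fix the sign conventions so that $K^{M_1}_{{\cal V}}(\Pi)-K^{{\cal V}}_{{\cal V}}(\Pi)$ is the negative of a Gauss-type determinant, exactly as in the displayed formula. Taking $i=1$, $j=2$ gives
\[
K_{{\cal V}}^{M_{1}}(\Pi )=K_{{\cal V}}^{{\cal V}}(\Pi )-\sum_{\ell }\Big[({\cal T}^{{\cal H}})_{11}^{\ell }({\cal T}^{{\cal H}})_{22}^{\ell }-\big(({\cal T}^{{\cal H}})_{12}^{\ell }\big)^{2}\Big].
\]

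Next, when $N\neq 0$, choose $U_1$ parallel to $N$, so that $\mathrm{tr}({\cal T}^{{\cal H}})^{\ell }=0$ for $\ell \geq 2$ and $\sum_i ({\cal T}^{{\cal H}})^1_{ii}=r\Vert H\Vert$. Define
\[
\epsilon =2\tau _{{\cal V}}^{M_{1}}-2\tau _{{\cal V}}^{{\cal V}}-\frac{r^{2}(r-2)}{r-1}\Vert H\Vert ^{2}.
\]
The first identity can then be rewritten as $(\sum_i a_i)^2=(r-1)(\sum_i a_i^2+b)$ with $a_i=({\cal T}^{{\cal H}})^1_{ii}$ and $b$ collecting $-\epsilon$ together with the off-diagonal terms $\sum_{i\ne j}(({\cal T}^{{\cal H}})_{ij}^{1})^2$ and all the $\ell\ge 2$ contributions (sign adjusted to the convention). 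Lemma \ref{Lemma 2} with $k=r>2$ gives $2a_1a_2\ge b$. Substituting this into the expression for $K^{M_1}_{{\cal V}}(\Pi)$ bounds the $\ell=1$ part of the Gauss-type term. For the $\ell\ge2$ part I complete squares: $\frac12\sum_{i\ne j}(({\cal T}^{{\cal H}})^\ell_{ij})^2+\sum_{\ell\ge2}\big[\ldots\big]$ combined with $-({\cal T}^{{\cal H}})^\ell_{11}({\cal T}^{{\cal H}})^\ell_{22}$ produces $\frac12(({\cal T}^{{\cal H}})^\ell_{11}+({\cal T}^{{\cal H}})^\ell_{22})^2+\frac12\sum_{i\geq3}(\ldots)^2$ plus nonnegative off-diagonal squares, using the vanishing traces. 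This should yield $K^{M_1}_{{\cal V}}(\Pi)-K^{{\cal V}}_{{\cal V}}(\Pi)\ge \epsilon/2$ (or the reversed form, depending on convention), which rearranges to (\ref{eq-GCFI-(1)}). The case $N=0$ is handled by the same computation without the distinguished direction.

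For the equality case, equality must hold in Lemma \ref{Lemma 2}, giving $a_1+a_2=a_3=\cdots=a_r$. It must also hold in every discarded square. This forces $({\cal T}^{{\cal H}})^1_{ij}=0$ for $i\ne j$ and, for $\ell\ge2$, $({\cal T}^{{\cal H}})^\ell_{ij}=0$ unless $i,j\in\{1,2\}$, together with $({\cal T}^{{\cal H}})^\ell_{11}+({\cal T}^{{\cal H}})^\ell_{22}=0$. These are exactly the matrices (\ref{eq-TH-matrix-1})--(\ref{eq-TH-matrix-2}). The converse is a direct substitution.

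The main obstacle I expect is the bookkeeping in the second step: the sign convention of (\ref{eq-P-(10)}) relative to the paper's $K=R(V_1,V_2,V_2,V_1)$, and splitting the off-diagonal terms so that exactly $b$ appears and all leftover terms are perfect squares with the correct sign. I would check this first on the $\ell=1$ block, where it reduces verbatim to Chen's computation, and then on the trace-free blocks.
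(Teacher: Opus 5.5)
Your strategy is the paper's (Lemma~\ref{Lemma 2} with $U_1\parallel N$, then completing squares). However, the step you defer as sign bookkeeping is where the argument stops reaching the statement.

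Putting $F_1=F_4=V_i$, $F_2=F_3=V_j$ in (\ref{eq-P-(10)}) as printed gives $2\tau^{M_1}_{\mathcal V}=2\tau^{\mathcal V}_{\mathcal V}+r^2\Vert H\Vert^2-\Vert{\mathcal T}^{\mathcal H}\Vert^2$ and $K^{M_1}_{\mathcal V}(\Pi)=K^{\mathcal V}_{\mathcal V}(\Pi)+\sum_{\ell}\bigl[({\mathcal T}^{\mathcal H})^{\ell}_{11}({\mathcal T}^{\mathcal H})^{\ell}_{22}-(({\mathcal T}^{\mathcal H})^{\ell}_{12})^2\bigr]$. These are the opposite of both identities you display. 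The paper uses them literally: with $\varepsilon$ as in (\ref{eq-(4)}) one has $r^2\Vert H\Vert^2=(r-1)(\varepsilon+\Vert{\mathcal T}^{\mathcal H}\Vert^2)$, Lemma~\ref{Lemma 2} gives (\ref{eq-(6)}), and (\ref{eq-(6a)}) then yields $K^{\mathcal V}_{\mathcal V}(\Pi)\le K^{M_1}_{\mathcal V}(\Pi)-\varepsilon/2$, which is (\ref{eq-GCFI-(1)}).

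With your signs the bookkeeping does not close. Letting $b$ collect $-\epsilon$ is wrong, because negating $\epsilon$ also negates its $\frac{r^2(r-2)}{r-1}\Vert H\Vert^2$ term. The right-hand side of (\ref{eq-P-(13)}) would then be $r^2(2r-3)\Vert H\Vert^2$ rather than $r^2\Vert H\Vert^2$. The quantity that fits Lemma~\ref{Lemma 2} is $\epsilon'=2\tau^{\mathcal V}_{\mathcal V}-2\tau^{M_1}_{\mathcal V}-\frac{r^2(r-2)}{r-1}\Vert H\Vert^2$. With it, your argument gives $K^{\mathcal V}_{\mathcal V}(\Pi)\ge K^{M_1}_{\mathcal V}(\Pi)+\epsilon'/2$, that is
\[
\tau^{\mathcal V}_{\mathcal V}(p)-K^{\mathcal V}_{\mathcal V}(\Pi)\le\tau^{M_1}_{\mathcal V}(p)-K^{M_1}_{\mathcal V}(\Pi)+\frac{r^2(r-2)}{2(r-1)}\Vert H\Vert^2 .
\]
This is the classical Chen inequality for the fibres. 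Both its direction and the sign of the $\Vert H\Vert^2$ term are reversed relative to (\ref{eq-GCFI-(1)}), so it does not rearrange to the statement. The hedge ``or the reversed form, depending on convention'' is not available, because the theorem fixes the direction.

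No choice of convention rescues your version: with your identities the conclusion is the reverse inequality whatever convention you adopt. In fact your identities are the correct Gauss equation for the fibres, whose second fundamental form is ${\mathcal T}^{\mathcal H}$, when $R(X,Y,Y,X)$ is the sectional curvature. That is the convention presupposed by (\ref{eq-RSF}) and by the later use of $K^{M_1}_{\mathcal V}(\Pi)=c$, and under it (\ref{eq-GCFI-(1)}) itself fails.

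For a counterexample, let $C\subset\mathbb{R}^3$ be a catenoid, $M_1$ a small neighbourhood in $\mathbb{R}^4$ of a point $p\in C\times\mathbb{R}$, and $\pi$ the signed distance to $C\times\mathbb{R}$, which is a Riemannian submersion onto an interval. Then $r=3$, $M_1$ is flat, and the fibre through $p$ is minimal with principal curvatures $\kappa,-\kappa,0$, where $\kappa\neq0$. Taking $\Pi$ spanned by the last two principal directions gives $\tau^{\mathcal V}_{\mathcal V}(p)-K^{\mathcal V}_{\mathcal V}(\Pi)=-\kappa^2<0$, while the right-hand side of (\ref{eq-GCFI-(1)}) is $0$. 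So the stated direction comes only from using (\ref{eq-P-(10)}) verbatim, as the paper does. Your computation, once made consistent, proves the reverse inequality instead.

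A minor point on the equality case: $({\mathcal T}^{\mathcal H})^{1}_{12}$ cancels from the estimate and is not forced to vanish. The paper obtains $({\mathcal T}^{\mathcal H})^{1}_{12}=0$ by rotating $\{V_1,V_2\}$ within $\Pi$.
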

\begin{proof}
By (\ref{eq-P-(10)}), (\ref{eq-P-(9.1)}) and (\ref%
{eq-P-(9.2.1)}), we obtain 
\[
2\tau _{{\cal V}}^{M_{1}}(p)=2\tau _{{\cal V}}^{{\cal V}}(p)+%
\sum_{i,j=1}^{r}\sum_{\ell =1}^{s}\left( \left( {\cal T}^{{\cal H}}\right)
_{ii}^{\ell }\left( {\cal T}^{{\cal H}}\right) _{jj}^{\ell }-\left( \left( 
{\cal T}^{{\cal H}}\right) _{ij}^{\ell }\right) ^{2}\right) . 
\]%
By using (\ref{eq-P-(9)}) and (\ref{eq-P-(14)}), we obtain 
\begin{equation}
2\tau _{{\cal V}}^{M_{1}}(p)=2\tau _{{\cal V}}^{{\cal V}}(p)+r^{2}\left\Vert
H\right\Vert ^{2}-\left\Vert {\cal T}^{{\cal H}}\right\Vert ^{2}.
\label{eq-(3)}
\end{equation}%
Define 
\begin{equation}
\varepsilon =2\tau _{{\cal V}}^{M_{1}}(p)-2\tau _{{\cal V}}^{{\cal V}}(p)-%
\frac{r^{2}\left( r-2\right) }{\left( r-1\right) }\left\Vert H\right\Vert
^{2}.  \label{eq-(4)}
\end{equation}%
From (\ref{eq-(3)}) and (\ref{eq-(4)}), it follows that 
\begin{equation}
r^{2}\Vert H\Vert ^{2}=(r-1)\left( \varepsilon +\Vert {\cal T}^{{\cal H}%
}\Vert ^{2}\right) .  \label{eq-(5)}
\end{equation}%
Since the mean curvature vector $H$ is in the direction of $U_{1}$, then by (%
\ref{eq-(5)}), we obtain 
\begin{eqnarray*}
\left( \sum_{i=1}^{r}\left( {\cal T}^{{\cal H}}\right) _{ii}^{1}\right) ^{2}
&=&\left( r-1\right) \left\{ \sum_{i=1}^{r}\left( \left( {\cal T}^{{\cal H}%
}\right) _{ii}^{1}\right) ^{2}+\sum_{i\neq j=1}^{r}\left( \left( {\cal T}^{%
{\cal H}}\right) _{ij}^{1}\right) ^{2}\right. \\
&&\qquad \qquad \left. +\sum_{i,j=1}^{r}\sum_{\ell =2}^{s}\left( \left( 
{\cal T}^{{\cal H}}\right) _{ij}^{\ell }\right) ^{2}+\varepsilon \right\} .
\end{eqnarray*}%
Applying Lemma~{\rm \ref{Lemma 2}}, we obtain 
\begin{equation}
2\left( {\cal T}^{{\cal H}}\right) _{11}^{1}\left( {\cal T}^{{\cal H}%
}\right) _{22}^{1}\geq \sum_{i\neq j=1}^{r}\left( \left( {\cal T}^{{\cal H}%
}\right) _{ij}^{1}\right) ^{2}+\sum_{i,j=1}^{r}\sum_{\ell =2}^{s}\left(
\left( {\cal T}^{{\cal H}}\right) _{ij}^{\ell }\right) ^{2}+\varepsilon .
\label{eq-(6)}
\end{equation}%
Now, if $F_{1}=F_{4}=V_{1}$ and $F_{2}=F_{3}=V_{2}$ in (\ref{eq-P-(10)}),
then we obtain 
\begin{eqnarray}
K_{{\cal V}}^{M_{1}}\left( \Pi \right) &=&K_{{\cal V}}^{{\cal V}}\left( \Pi
\right) +\left( {\cal T}^{{\cal H}}\right) _{11}^{1}\left( {\cal T}^{{\cal H}%
}\right) _{22}^{1}-\left( \left( {\cal T}^{{\cal H}}\right) _{12}^{1}\right)
^{2}  \nonumber \\
&&+\sum_{\ell =2}^{s}\left( \left( {\cal T}^{{\cal H}}\right) _{11}^{\ell
}\left( {\cal T}^{{\cal H}}\right) _{22}^{\ell }-\left( \left( {\cal T}^{%
{\cal H}}\right) _{12}^{\ell }\right) ^{2}\right) .  \label{eq-(6a)}
\end{eqnarray}%
From (\ref{eq-(6)}) and (\ref{eq-(6a)}), we get 
\begin{eqnarray*}
K_{{\cal V}}^{M_{1}}\left( \Pi \right) &\geq &K_{{\cal V}}^{{\cal V}}\left(
\Pi \right) +\sum_{\ell =1}^{s}\sum_{j>2}^{r}\left( \left( \left( {\cal T}^{%
{\cal H}}\right) _{1j}^{\ell }\right) ^{2}+\left( \left( {\cal T}^{{\cal H}%
}\right) _{2j}^{\ell }\right) ^{2}\right) \\
&&+\frac{1}{2}\sum_{i\neq j>2}^{r}\left( \left( \left( {\cal T}^{{\cal H}%
}\right) _{ij}^{1}\right) ^{2}\right) +\sum_{\ell
=2}^{s}\sum_{i,j>2}^{r}\left( \left( {\cal T}^{{\cal H}}\right) _{ij}^{\ell
}\right) ^{2} \\
&&+\sum_{\ell =2}^{s}\frac{1}{2}\left( \left( {\cal T}^{{\cal H}}\right)
_{11}^{\ell }+\left( {\cal T}^{{\cal H}}\right) _{22}^{\ell }\right) ^{2}+%
\frac{\varepsilon }{2}.
\end{eqnarray*}%
or 
\begin{equation}
K_{{\cal V}}^{{\cal V}}(\Pi )\leq K_{{\cal V}}^{M_{1}}(\Pi )-\frac{%
\varepsilon }{2}.  \label{eq-(7)}
\end{equation}%
In view of (\ref{eq-(4)}) and (\ref{eq-(7)}), we get (\ref{eq-GCFI-(1)}). If the equality in (\ref{eq-GCFI-(1)}) holds, then the inequalities given by
(\ref{eq-(7)}) and (\ref{eq-(6)}) become equalities. In this case, we have 
\begin{align*}
\left( {\cal T}^{{\cal H}}\right) _{1j}^{\ell }=\left( {\cal T}^{{\cal H}%
}\right) _{2j}^{\ell }& =0, & & j>2,\;\ell =1,\ldots ,s, \\
\left( {\cal T}^{{\cal H}}\right) _{ij}^{1}& =0, & & i\neq j>2, \\
\left( {\cal T}^{{\cal H}}\right) _{ij}^{\ell }& =0, & & i,j>2,\;\ell
=2,\ldots ,s, \\
\left( {\cal T}^{{\cal H}}\right) _{11}^{\ell }+\left( {\cal T}^{{\cal H}%
}\right) _{22}^{\ell }& =0, & & \ell =2,\ldots ,s.
\end{align*}
Moreover, we can choose an orthonormal basis $\{V_{1},V_{2}\}$ such that $%
\left( {\cal T}^{{\cal H}}\right) _{12}^{1}=\left( {\cal T}^{{\cal H}%
}\right) _{21}^{1}=0$. By applying Lemma~\ref{Lemma 2}, it follows that 
\[
\left( {\cal T}^{{\cal H}}\right) _{11}^{1}+\left( {\cal T}^{{\cal H}%
}\right) _{22}^{1}=\left( {\cal T}^{{\cal H}}\right) _{33}^{1}=\cdots
=\left( {\cal T}^{{\cal H}}\right) _{rr}^{1}. 
\]
\end{proof} 
\begin{theorem}
Let $\pi :(M_{1},g_{1})\rightarrow (M_{2},g_{2})$ be a Riemannian submersion
between two Riemannian manifolds with $\dim M_{1}=n$ and $\dim M_{2}=m$. If $%
\dim {\cal V}_{p}=r>2$, then for $2$-plane $\Pi \subset {\cal V}_{p}$, 
\begin{equation}
\hat{\delta}^{{\cal V}}\left( 2\right) \geq \;\tau _{{\cal V}%
}^{M_{1}}(p)-\sup \left\{ K_{{\cal V}}^{M_{1}}(\Pi )\right\} \;-\;\frac{%
r^{2}(r-2)}{2(r-1)}\,\Vert H\Vert ^{2}.  \label{eq-delta-1}
\end{equation}%
The equality holds in {\rm (\ref{eq-delta-1})} if and only if the tensor $%
{\cal T}^{{\cal H}}$ takes the form given by {\rm (\ref{eq-TH-matrix-1})}
and {\rm (\ref{eq-TH-matrix-2})}.
\end{theorem}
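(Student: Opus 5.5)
The plan is to deduce this from Theorem~\ref{Theorem 1}, which already supplies, for every $2$-plane $\Pi\subset{\cal V}_{p}$,
\[
\tau_{{\cal V}}^{{\cal V}}(p)-K_{{\cal V}}^{{\cal V}}(\Pi)\geq \tau_{{\cal V}}^{M_{1}}(p)-K_{{\cal V}}^{M_{1}}(\Pi)-\frac{r^{2}(r-2)}{2(r-1)}\Vert H\Vert^{2}.
\]
I read $\hat{\delta}^{{\cal V}}(2)$ as $\tau_{{\cal V}}^{{\cal V}}(p)-\sup K_{{\cal V}}^{{\cal V}}$, in analogy with (\ref{eq-delta-v-01}). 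The key observation is that $\tau_{{\cal V}}^{{\cal V}}(p)$, $\tau_{{\cal V}}^{M_{1}}(p)$ and $\Vert H\Vert^{2}$ do not depend on $\Pi$. The inequality therefore compares two functions of $\Pi$ over the compact Grassmannian of $2$-planes in ${\cal V}_{p}$, on which both $K_{{\cal V}}^{{\cal V}}$ and $K_{{\cal V}}^{M_{1}}$ are continuous, so both suprema are attained.

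\textbf{Main step.} Rearranging Theorem~\ref{Theorem 1} gives, for every $\Pi$,
\[
K_{{\cal V}}^{{\cal V}}(\Pi)\leq K_{{\cal V}}^{M_{1}}(\Pi)+C, \qquad C=\tau_{{\cal V}}^{{\cal V}}(p)-\tau_{{\cal V}}^{M_{1}}(p)+\frac{r^{2}(r-2)}{2(r-1)}\Vert H\Vert^{2}.
\]
Since $K_{{\cal V}}^{M_{1}}(\Pi)\leq\sup K_{{\cal V}}^{M_{1}}$, taking the supremum over $\Pi$ on the left yields $\sup K_{{\cal V}}^{{\cal V}}\leq \sup K_{{\cal V}}^{M_{1}}+C$. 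Substituting this into the definition of $\hat{\delta}^{{\cal V}}(2)$ gives (\ref{eq-delta-1}) immediately.

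\textbf{Equality case.} Suppose equality holds in (\ref{eq-delta-1}). Choose $\Pi_{0}$ attaining $\sup K_{{\cal V}}^{{\cal V}}$. Then the chain
\[
\sup K_{{\cal V}}^{{\cal V}}=K_{{\cal V}}^{{\cal V}}(\Pi_{0})\leq K_{{\cal V}}^{M_{1}}(\Pi_{0})+C\leq \sup K_{{\cal V}}^{M_{1}}+C
\]
must consist of equalities. Hence Theorem~\ref{Theorem 1} is an equality at $\Pi_{0}$. Its equality characterization, applied to an orthonormal basis $\{V_{1},V_{2}\}$ of $\Pi_{0}$ extended to ${\cal V}_{p}$ and with $U_{1}$ parallel to $H$, gives the forms (\ref{eq-TH-matrix-1}) and (\ref{eq-TH-matrix-2}).

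\textbf{Converse, the main obstacle.} Suppose ${\cal T}^{{\cal H}}$ has these forms with respect to a basis adapted to a plane $\Pi_{0}$. Theorem~\ref{Theorem 1} then gives equality at $\Pi_{0}$, that is, $K_{{\cal V}}^{{\cal V}}(\Pi_{0})=K_{{\cal V}}^{M_{1}}(\Pi_{0})+C$. To get equality in (\ref{eq-delta-1}) we additionally need $\Pi_{0}$ to realize both suprema. I would impose this explicitly: the forms are taken relative to a plane realizing $\sup K_{{\cal V}}^{M_{1}}$. For such a plane the chain gives $\sup K_{{\cal V}}^{{\cal V}}\geq K_{{\cal V}}^{{\cal V}}(\Pi_{0})=\sup K_{{\cal V}}^{M_{1}}+C\geq \sup K_{{\cal V}}^{{\cal V}}$, so equality follows. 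I would state this implicit assumption, which parallels Chen's treatment of $\hat{\delta}$, as part of the interpretation of the equality clause rather than try to prove that the matrix forms force it, since without it the converse can fail.
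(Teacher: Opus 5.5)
Your proof of the inequality is the same as the paper's. Both use only that $\tau^{\mathcal V}_{\mathcal V}(p)$, $\tau^{M_1}_{\mathcal V}(p)$ and $\Vert H\Vert^2$ do not depend on $\Pi$, and pass to the supremum over $\Pi$ in Theorem~\ref{Theorem 1}; the paper phrases this as taking the infimum of $\tau-K$ on both sides.

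The equality case is where you go beyond the paper, which only restates the characterization from Theorem~\ref{Theorem 1} without argument. Your ``only if'' direction supplies what is missing. Compactness of the Grassmannian gives a plane $\Pi_0$ realizing $\sup K^{\mathcal V}_{\mathcal V}$. Your chain then forces both equality in Theorem~\ref{Theorem 1} at $\Pi_0$ and $K^{M_1}_{\mathcal V}(\Pi_0)=\sup K^{M_1}_{\mathcal V}$. Together with your converse this gives a correct characterization: equality holds if and only if $\mathcal{T}^{\mathcal H}$ takes the forms (\ref{eq-TH-matrix-1}), (\ref{eq-TH-matrix-2}) relative to some plane realizing $\sup K^{M_1}_{\mathcal V}$. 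The extra condition is not an artefact of your method; without it the ``if'' direction is false.

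Here is an example backing your caveat. Let $M_1=(-\tfrac14,\tfrac14)\times\mathbb{R}^3$ with $g_1=dt^2+(1-2t)(dx_1^2+dx_2^2)+(1-4t)\,dx_3^2$, and let $\pi(t,x)=t$ onto $\bigl((-\tfrac14,\tfrac14),dt^2\bigr)$.
\begin{itemize}
\item Here $r=3$, $s=1$ and the fibres are flat.
\item At $t=0$, with $U_1=\partial_t$ and $V_i=\partial_{x_i}$, one has $(\mathcal{T}^{\mathcal H})^1_{ij}=-\tfrac12\partial_t g_1(V_i,V_j)=\mathrm{diag}(1,1,2)$. So the forms hold relative to $\Pi_0=\mathrm{span}\{V_1,V_2\}$ with $a=b=1$.
\item By (\ref{eq-P-(10)}), $K^{M_1}_{\mathcal V}(\Pi)$ is the determinant of $\mathcal{T}^{\mathcal H}$ restricted to $\Pi$. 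It ranges over $[1,2]$, with its minimum exactly at $\Pi_0$ and its maximum on the planes containing $V_3$.
\item Hence $\tau^{M_1}_{\mathcal V}=5$, $\sup K^{M_1}_{\mathcal V}=2$, $\hat\delta^{\mathcal V}(2)=0$ and $\frac{r^2(r-2)}{2(r-1)}\Vert H\Vert^2=4$. So (\ref{eq-delta-1}) reads $0\geq -1$, strictly.
\end{itemize}
Every plane realizes $\sup K^{\mathcal V}_{\mathcal V}=0$ here, so the example also shows the adapted plane must realize $\sup K^{M_1}_{\mathcal V}$, not merely $\sup K^{\mathcal V}_{\mathcal V}$.

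These values are computed from (\ref{eq-P-(10)}) as printed. Its $\mathcal{T}$-terms carry the opposite sign to the usual Gauss equation under the normalization (\ref{eq-RSF}). Theorem~\ref{Theorem 1}, on which both your argument and the paper's rest, holds in the curvature convention in which (\ref{eq-P-(10)}) is correct.
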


\begin{proof} From (\ref{eq-GCFI-(1)}), we have 
\begin{equation}
\tau _{{\cal V}}^{{\cal V}}(p)-K_{{\cal V}}^{{\cal V}}(\Pi )\geq \inf
\left\{ \tau _{{\cal V}}^{M_{1}}(p)-K_{{\cal V}}^{M_{1}}(\Pi )\;-\;\frac{%
r^{2}(r-2)}{2(r-1)}\,\Vert H\Vert ^{2}\right\} ,  \label{eq-delta-2}
\end{equation}%
that is $\tau _{{\cal V}}^{M_{1}}(p)-K_{{\cal V}}^{M_{1}}(\Pi )\;-\;\frac{%
r^{2}(r-2)}{2(r-1)}\,\Vert H\Vert ^{2}$ is a lower bound for $\tau _{{\cal V}%
}^{{\cal V}}(p)-K_{{\cal V}}^{{\cal V}}(\Pi )$. Hence, from (\ref{eq-delta-2}%
), we get 
\begin{equation}
\inf \left\{ \tau _{{\cal V}}^{{\cal V}}(p)-K_{{\cal V}}^{{\cal V}}(\Pi
)\right\} \geq \;\inf \left\{ \tau _{{\cal V}}^{M_{1}}(p)-K_{{\cal V}%
}^{M_{1}}(\Pi )\;-\;\frac{r^{2}(r-2)}{2(r-1)}\,\Vert H\Vert ^{2}\right\} .
\label{eq-delta-3}
\end{equation}%
Since for a fixed ${\cal V}_{p}$, $\tau _{{\cal V}}^{{\cal V}}(p)$, $\frac{%
r^{2}(r-2)}{2(r-1)}\,\Vert H\Vert ^{2}$ and $\tau _{{\cal V}}^{M_{1}}(p)$
are real constants. Thus, from (\ref{eq-delta-3}), we have 
\[
\tau _{{\cal V}}^{{\cal V}}(p)-\sup \left\{ K_{{\cal V}}^{{\cal V}}(\Pi
)\right\} \geq \;\tau _{{\cal V}}^{M_{1}}(p)-\sup \left\{ K_{{\cal V}%
}^{M_{1}}(\Pi )\right\} \;-\;\frac{r^{2}(r-2)}{2(r-1)}\,\Vert H\Vert ^{2}. 
\]%
In view of (\ref{eq-delta-V-02}), we get (\ref{eq-delta-1}). The equality
holds in {\rm (\ref{eq-delta-1})} if and only if the tensor ${\cal T}^{{\cal %
H}}$ takes the form given by {\rm (\ref{eq-TH-matrix-1})} and {\rm (\ref%
{eq-TH-matrix-2})}. \newline
\end{proof}
We construct the following two examples that satisfy the assumptions of
Theorem~\ref{Theorem 1}. Among these, one example attains the equality in
inequality obtained in Theorem~\ref{Theorem 1}, while the other does not
achieve equality.
\begin{example}
\label{Exa-GIRMEDNH} Let $\pi :{\Bbb R}^{6}\rightarrow {\Bbb R}^{3}$ be
defined by 
\[
\pi (x_{1},x_{2},\ldots ,x_{6})=\left( x_{3}\sin \alpha -x_{5}\cos \alpha
,\;x_{4},\;x_{6}\right) ,\qquad 0^{\circ }<\alpha <90^{\circ }. 
\]%
Let the Riemannian metrics $g_{1}$ and $g_{2}$ on the total and base spaces
be given by 
\[
g_{1}=e^{2x_{4}}(dx_{1})^{2}+e^{2x_{6}}(dx_{2})^{2}+(dx_{3})^{2}+e^{2x_{6}}(dx_{4})^{2}+(dx_{5})^{2}+e^{2x_{4}}(dx_{6})^{2}, 
\]%
and 
\[
g_{2}=(dy_{1})^{2}+e^{2y_{3}}(dy_{2})^{2}+e^{2y_{2}}(dy_{3})^{2}. 
\]%
Then the vertical and horizontal distributions are given by 
\[
{\cal V}=\ker \pi _{\ast }={\rm span}\left\{ e^{-x_{4}}\frac{\partial }{%
\partial x_{1}},\;e^{-x_{6}}\frac{\partial }{\partial x_{2}},\;\cos \alpha 
\frac{\partial }{\partial x_{3}}+\sin \alpha \frac{\partial }{\partial x_{5}}%
\right\} , 
\]%
\[
{\cal H}={\rm span}\left\{ U_{1}=\sin \alpha \frac{\partial }{\partial x_{3}}%
-\cos \alpha \frac{\partial }{\partial x_{5}},\;U_{2}=e^{-x_{6}}\frac{%
\partial }{\partial x_{4}},\;U_{3}=e^{-x_{4}}\frac{\partial }{\partial x_{6}}%
\right\} . 
\]%
A direct computation yields 
\[
\pi _{\ast }U_{1}=\frac{\partial }{\partial y_{1}},\quad \pi _{\ast
}U_{2}=e^{-y_{3}}\frac{\partial }{\partial y_{2}},\quad \pi _{\ast
}U_{3}=e^{-y_{2}}\frac{\partial }{\partial y_{3}}. 
\]%
Now we verify the isometry condition: 
\[
g_{1}(U_{1},U_{1})=1=g_{2}\left( \frac{\partial }{\partial y_{1}},\frac{%
\partial }{\partial y_{1}}\right) , 
\]%
\[
g_{1}(U_{2},U_{2})=e^{2x_{6}}\cdot e^{-2x_{6}}=1=g_{2}\left( e^{-y_{3}}\frac{%
\partial }{\partial y_{2}},e^{-y_{3}}\frac{\partial }{\partial y_{2}}\right)
, 
\]%
\[
g_{1}(U_{3},U_{3})=e^{2x_{4}}\cdot e^{-2x_{4}}=1=g_{2}\left( e^{-y_{2}}\frac{%
\partial }{\partial y_{3}},e^{-y_{2}}\frac{\partial }{\partial y_{3}}\right)
, 
\]%
and $g_{1}(U_{i},U_{j})=0=g_{2}(\pi _{\ast }U_{i},\pi _{\ast }U_{j})$ for $%
i\neq j$. Thus, 
\[
g_{1}(U_{i},U_{j})=g_{2}(\pi _{\ast }U_{i},\pi _{\ast }U_{j}),\quad i,j\in
\{1,2,3\}, 
\]%
and ${\rm rank}\,\pi =3$. Hence, $\pi $ is a Riemannian submersion. Next,
using the Christoffel symbol formula 
\[
\Gamma _{jk}^{i}=\frac{1}{2}g^{ii}\left( \frac{\partial g_{ik}}{\partial
x_{j}}+\frac{\partial g_{ij}}{\partial x_{k}}-\frac{\partial g_{jk}}{%
\partial x_{i}}\right) , 
\]%
the non-zero derivatives of metric components are 
\[
\frac{\partial g_{11}}{\partial x_{4}} =2e^{2x_{4}},\quad \frac{\partial
g_{22}}{\partial x_{6}}=2e^{2x_{6}}, \quad \frac{\partial g_{44}}{\partial x_{6}} =2e^{2x_{6}},\quad \frac{\partial g_{66}}{\partial x_{4}}=2e^{2x_{4}}.
\]
From this, one obtains the non-vanishing Christoffel symbols: 
\[
\Gamma _{14}^{1}=\Gamma _{41}^{1}=1,\quad \Gamma _{26}^{2}=\Gamma
_{62}^{2}=1, 
\]%
\[
\Gamma _{11}^{4}=-e^{2(x_{4}-x_{6})},\quad \Gamma _{46}^{4}=\Gamma
_{64}^{4}=1,\quad \Gamma _{66}^{4}=-e^{2(x_{4}-x_{6})}, 
\]%
\[
\Gamma _{22}^{6}=-e^{2(x_{6}-x_{4})},\quad \Gamma
_{44}^{6}=-e^{2(x_{6}-x_{4})},\quad \Gamma _{46}^{6}=\Gamma _{64}^{6}=1. 
\]%
The covariant derivatives of vertical vector fields are 
\begin{align*}
\nabla _{V_{1}}V_{1}& =\nabla _{\displaystyle\frac{1}{e^{x_{4}}}\frac{%
\partial }{\partial x_{1}}}\left( \frac{1}{e^{x_{4}}}\frac{\partial }{%
\partial x_{1}}\right) =-e^{-2x_{6}}\frac{\partial }{\partial x_{4}}, \\
\nabla _{V_{2}}V_{2}& =\nabla _{\displaystyle\frac{1}{e^{x_{6}}}\frac{%
\partial }{\partial x_{2}}}\left( \frac{1}{e^{x_{6}}}\frac{\partial }{%
\partial x_{2}}\right) =-e^{-2x_{4}}\frac{\partial }{\partial x_{6}}, \\
\nabla _{V_{3}}V_{3}& =\nabla _{\displaystyle\cos \alpha \frac{\partial }{%
\partial x_{3}}+\sin \alpha \frac{\partial }{\partial x_{5}}}\left( \cos
\alpha \frac{\partial }{\partial x_{3}}+\sin \alpha \frac{\partial }{%
\partial x_{5}}\right) =0,
\end{align*}%
and%
\[
\nabla _{V_{i}}V_{j}=0,\quad i\neq j. 
\]%
Components of the $T^{{\cal H}}$ tensor are%
\begin{eqnarray*}
\left( {\cal T}^{{\cal H}}\right) _{11}^{1} &=&0,\ \left( {\cal T}^{{\cal H}%
}\right) _{11}^{2}=-e^{-3x_{6}},\quad \left( {\cal T}^{{\cal H}}\right)
_{11}^{3}=0, \\
\left( {\cal T}^{{\cal H}}\right) _{22}^{1} &=&0,\ \left( {\cal T}^{{\cal H}%
}\right) _{22}^{2}=0,\quad \left( {\cal T}^{{\cal H}}\right)
_{22}^{3}=-e^{-3x_{4}}, \\
\left( {\cal T}^{{\cal H}}\right) _{33}^{1} &=&0,\ \left( {\cal T}^{{\cal H}%
}\right) _{33}^{2}=0,\quad \left( {\cal T}^{{\cal H}}\right) _{33}^{3}=0, \\
\left( {\cal T}^{{\cal H}}\right) _{ij}^{\alpha } &=&0,\ i\neq j\quad \alpha
\in \left\{ 1,2,3\right\} .
\end{eqnarray*}%
We observe that%
\[
\left( {\cal T}^{{\cal H}}\right) _{11}^{2}+\left( {\cal T}^{{\cal H}%
}\right) _{22}^{2}=-e^{-3x_{6}}\neq 0. 
\]%
Clearly, for $\pi $ the inequality obtained in Theorem \ref{Theorem 1} does
not attain equality.
\end{example}
\begin{example}
\label{Exa-GIGSEH}Let 
\[
N_{1}=\left\{ \left( x_{1},x_{2},x_{3},x_{4},x_{5},x_{6}\right) \in {\Bbb R}%
^{6}\ :\ x_{1},x_{3},x_{5}>0\right\} 
\]
and 
\[
N_{2}=\left\{ \left( y_{1},y_{2},y_{3}\right) \in {\Bbb R}^{3}\right\} 
\]
be two Riemannian manifolds equipped with the Riemannian metrics 
\begin{align*}
g_{1} &= (x_{1})^{2}(dx_{1})^{2} + (dx_{2})^{2} + (x_{3})^{2}(dx_{3})^{2} \\
&\quad + (dx_{4})^{2} + (x_{5})^{2}(dx_{5})^{2} + (dx_{6})^{2}, \\
g_{2} &= (dy_{1})^{2} + (dy_{2})^{2} + (dy_{3})^{2},
\end{align*}
respectively. Define a map 
\[
\pi : (N_{1}, g_{1}) \longrightarrow (N_{2}, g_{2}) 
\]
by 
\[
\pi(x_{1},x_{2},x_{3},x_{4},x_{5},x_{6})=\left( x_{2},x_{4},x_{6}\right) . 
\]%
Then we have%
\begin{eqnarray*}
{\cal V} &=&{\rm span}\left\{ V_{1}=e_{1},V_{2}=e_{3},V_{3}=e_{5}\right\}, \\
{\cal H} &=&{\rm span}\left\{ U_{1}=e_{2},U_{2}=e_{4},U_{3}=e_{6}\right\}, \\
{\rm range}\pi _{\ast } &=&{\rm span}\left\{ \pi _{\ast }U_{1}=e_{1}^{\ast
},\pi _{\ast }U_{2}=e_{2}^{\ast },\pi _{\ast }U_{3}=e_{3}^{\ast }\right\} ,
\end{eqnarray*}%
where $\left\{ e_{1}=\frac{1}{x_{1}}\frac{\partial }{\partial x_{1}},e_{2}=%
\frac{\partial }{\partial x_{2}},e_{3}=\frac{1}{x_{3}}\frac{\partial }{%
\partial x_{3}},e_{4}=\frac{\partial }{\partial x_{4}},e_{5}=\frac{1}{x_{5}}%
\frac{\partial }{\partial x_{5}},e_{6}=\frac{\partial }{\partial x_{6}}%
\right\} $ and \newline
$\left\{ e_{1}^{\ast }=\frac{\partial }{\partial y_{1}},e_{2}^{\ast }=\frac{%
\partial }{\partial y_{2}},e_{3}^{\ast }=\frac{\partial }{\partial y_{3}}%
\right\} $ be orthonormal bases of $T_{p}M$ and $T_{\pi \left( p\right) }N$,
respectively. We observe that $g_{1}\left( U_{i},U_{j}\right) =g_{2}\left(
\pi _{\ast }U_{i},\pi _{\ast }U_{j}\right) $ for all $U_{i},U_{j}\in {\cal H}
$ and ${\rm rank}~\pi =3$. Thus $\pi $ is a Riemannian submersion. The
non-zero Christoffel symbols of $g_{1}$ are%
\[
\Gamma _{ii}^{i}=\frac{1}{x_{i}},\quad i=1,3,5. 
\]%
Covariant derivatives of the vertical vector fields are 
\begin{eqnarray*}
\nabla _{V_{1}}V_{1} &=&\nabla _{\displaystyle\frac{1}{x_{1}}\frac{\partial 
}{\partial x_{1}}}\frac{1}{x_{1}}\frac{\partial }{\partial x_{1}}=\frac{1}{%
x_{1}}\left( -\frac{1}{x_{1}^{2}}\frac{\partial }{\partial x_{1}}+\frac{1}{%
x_{1}}\Gamma _{11}^{1}\frac{\partial }{\partial x_{1}}\right) =0, \\
\nabla _{V_{2}}V_{2} &=&\nabla _{\displaystyle\frac{1}{x_{3}}\frac{\partial 
}{\partial x_{3}}}\frac{1}{x_{3}}\frac{\partial }{\partial x_{3}}=\frac{1}{%
x_{3}}\left( -\frac{1}{x_{3}^{2}}\frac{\partial }{\partial x_{3}}+\frac{1}{%
x_{3}}\Gamma _{33}^{3}\frac{\partial }{\partial x_{3}}\right) =0, \\
\nabla _{V_{3}}V_{3} &=&\nabla _{\displaystyle\frac{1}{x_{5}}\frac{\partial 
}{\partial x_{5}}}\frac{1}{x_{5}}\frac{\partial }{\partial x_{5}}=\frac{1}{%
x_{5}}\left( -\frac{1}{x_{5}^{2}}\frac{\partial }{\partial x_{5}}+\frac{1}{%
x_{5}}\Gamma _{55}^{5}\frac{\partial }{\partial x_{5}}\right) =0,
\end{eqnarray*}%
and%
\[
\nabla _{V_{i}}V_{j}=0,\quad i\neq j. 
\]%
Then, we obtain 
\[
\left( {\cal T}^{{\cal H}}\right) _{ij}^{\alpha }=0,\quad 1\leq i,j\leq
3,\quad 1\leq \alpha \leq 3. 
\]%
Hence, we observe that the inequality obtained in {\rm Theorem \ref{Theorem
1}} attains equality.
\end{example}
\begin{theorem}
\label{Theorem RSFCF}Let $\pi :\left( {M_{1}}(c),g_{1}\right) \rightarrow
\left( M_{2},g_{2}\right) $ be a Riemannian submersion from real space form
of constant curvature $c$ onto a Riemannian manifold with $\dim {\cal V}%
_{p}=r>2$, then for any $2$-plane $\Pi \subset {\cal V}_{p}$, 
\begin{equation}
\tau _{{\cal V}}^{{\cal V}}(p)-K_{{\cal V}}^{{\cal V}}(\Pi )\geq \frac{1}{2}%
\left\{ c\left( r^{2}-r-2\right) -\frac{r^{2}\left( r-2\right) }{\left(
r-1\right) }\left\Vert H\right\Vert ^{2}\right\} .  \label{eq-RSF-(1)}
\end{equation}%
The equality holds in {\rm (\ref{eq-RSF-(1)}) }if and only if the tensor $%
{\cal T}^{{\cal H}}$ takes the form given by {\rm (\ref{eq-TH-matrix-1})}
and {\rm (\ref{eq-TH-matrix-2})}.
\end{theorem}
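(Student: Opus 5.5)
The plan is to specialize Theorem~\ref{Theorem 1} to the case where the total space is $M_{1}(c)$, by computing the two ambient quantities $\tau_{{\cal V}}^{M_{1}}(p)$ and $K_{{\cal V}}^{M_{1}}(\Pi)$ from (\ref{eq-RSF}). Inequality (\ref{eq-GCFI-(1)}) holds for every Riemannian submersion with $r>2$. So everything reduces to evaluating its right-hand side.

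First, from (\ref{eq-RSF}) and the Kulkarni--Nomizu product (\ref{eq-KN-product}), for orthonormal $X,Y$ we get
\[
R^{M_{1}}(X,Y,Y,X)=\frac{c}{2}\cdot 2\left\{ g_{1}(Y,Y)g_{1}(X,X)-g_{1}(X,Y)^{2}\right\} =c .
\]
Before using this, I will check the sign convention: with the definition $(T_{1}\circledast T_{2})(X,Y,Z,W)=T_{1}(Y,Z)T_{2}(X,W)-\cdots$, the choice $Z=Y$, $W=X$ gives $2\{g(Y,Y)g(X,X)-g(X,Y)^{2}\}$, and this is consistent with sectional curvature $c$. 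Applying the formula with $X=V_{1}$, $Y=V_{2}$ gives $K_{{\cal V}}^{M_{1}}(\Pi)=c$. Summing over $i\neq j$ in (\ref{eq-P-(9.2.1)}) gives $2\tau_{{\cal V}}^{M_{1}}(p)=r(r-1)c$, that is, $\tau_{{\cal V}}^{M_{1}}(p)=\tfrac{1}{2}r(r-1)c$.

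Next, I substitute these values into (\ref{eq-GCFI-(1)}):
\[
\tau_{{\cal V}}^{{\cal V}}(p)-K_{{\cal V}}^{{\cal V}}(\Pi)\ge \frac{r(r-1)}{2}c-c-\frac{r^{2}(r-2)}{2(r-1)}\Vert H\Vert^{2}
=\frac{1}{2}\left\{ c(r^{2}-r-2)-\frac{r^{2}(r-2)}{r-1}\Vert H\Vert^{2}\right\},
\]
which is exactly (\ref{eq-RSF-(1)}).

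For the equality case, the substitution replaced the right-hand side of (\ref{eq-GCFI-(1)}) by an identical quantity. Equality in (\ref{eq-RSF-(1)}) is therefore equivalent to equality in (\ref{eq-GCFI-(1)}), and by Theorem~\ref{Theorem 1} this holds if and only if ${\cal T}^{{\cal H}}$ has the forms (\ref{eq-TH-matrix-1}) and (\ref{eq-TH-matrix-2}).

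The only delicate point is the normalization of $R^{M_{1}}$ in (\ref{eq-RSF}). I must make sure the factor $\tfrac{c}{2}$ together with the factor $2$ produced by $g\circledast g$ yields sectional curvature exactly $c$, and that the index order $R(V_{i},V_{j},V_{j},V_{i})$ matches the paper's convention. Everything else is direct substitution.
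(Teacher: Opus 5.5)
Your derivation is exactly the paper's proof, and it is correct relative to it: read off $\tau_{\mathcal{V}}^{M_1}(p)=\tfrac{r(r-1)}{2}c$ and $K_{\mathcal{V}}^{M_1}(\Pi)=c$ from (\ref{eq-RSF}), substitute into (\ref{eq-GCFI-(1)}), and inherit the equality case from Theorem~\ref{Theorem 1}. The convention issue you flag is real, but it lies in the paper rather than in your step: (\ref{eq-P-(10)}) is O'Neill's formula in a convention where $R(X,Y,Y,X)=-K(X,Y)$, whereas (\ref{eq-RSF}) and your check use $R(X,Y,Y,X)=+K(X,Y)$, so with genuine curvatures the valid inequality is Chen's usual $\tau_{\mathcal{V}}^{\mathcal{V}}(p)-K_{\mathcal{V}}^{\mathcal{V}}(\Pi)\le\frac{1}{2}\{c(r^{2}-r-2)+\frac{r^{2}(r-2)}{r-1}\Vert H\Vert^{2}\}$ (the stated direction already fails, for a suitable $\Pi$, on the level sets of the distance function to a non-totally-geodesic minimal hypersurface of $\mathbb{R}^{4}$).
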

\begin{proof} By (\ref{eq-P-(9.2)}), (\ref{eq-P-(9.2.1)}) and (\ref%
{eq-RSF}), we get 
\begin{equation}
\tau _{{\cal V}}^{M_{1}}(p)=\frac{r\left( r-1\right) }{2}c,\quad K_{{\cal V}%
}^{M_{1}}\left( \Pi \right) =c.  \label{eq-RSF-(1.1)}
\end{equation}%
In view of (\ref{eq-GCFI-(1)}) and (\ref{eq-RSF-(1.1)}), we get (\ref%
{eq-RSF-(1)}). \end{proof}
\begin{example}
{\rm \cite{Lee_Lee_Sahin_Vilcu_21_AMPA}} \label{Exa-RSFEH}
Consider the standard Riemannian submersion
\[
\pi :{\Bbb S}^{15}(1)\rightarrow {\Bbb S}^{8}\left( \frac{1}{2}\right),
\]
whose fibers are totally geodesic and isometric to ${\Bbb S}^{7}$. Here, ${\Bbb S}^{15}(1)$ denotes the $15$-dimensional unit sphere of constant sectional curvature $1$, whereas ${\Bbb S}^{8}\left( \frac{1}{2}\right)$ is the $8$-dimensional sphere of constant sectional curvature $4$. This submersion gives an example for which equality holds in the inequality obtained in Theorem~\ref{Theorem RSFCF}.
\end{example}
\begin{theorem}
\label{Theorem CSFCF}Let $\pi :\left( {M_{1}}(c),g_{1}\right) \rightarrow
\left( M_{2},g_{2}\right) $ be a Riemannian submersion\ from a complex space
form of constant holomorphic sectional curvature $c$ onto a Riemannian
manifold with $\dim M_{1}=n=2k$, $\dim M_{2}=m$ and $\dim {\cal V}_{p}=r>2$,
then for any $2$-plane $\Pi \subset {\cal V}_{p}$, 
\begin{eqnarray}
\tau _{{\cal V}}^{{\cal V}}(p)-K_{{\cal V}}^{{\cal V}}(\Pi ) &\geq &\frac{1}{%
2}\left\{ \frac{c}{4}\left( r^{2}-r-2\right) +\frac{3c}{4}\left( \left\Vert
Q\right\Vert ^{2}-2\left( g_{1}\left( V_{1},QV_{2}\right) \right)
^{2}\right) \right.  \nonumber \\
&&\ \ \ \ \ \left. -\frac{r^{2}\left( r-2\right) }{\left( r-1\right) }%
\left\Vert H\right\Vert ^{2}\right\} .  \label{eq-GCSFMI}
\end{eqnarray}%
The equality holds in {\rm (\ref{eq-GCSFMI}) }if and only if the tensor $%
{\cal T}^{{\cal H}}$ takes the form given by {\rm (\ref{eq-TH-matrix-1})}
and {\rm (\ref{eq-TH-matrix-2})}.
\end{theorem}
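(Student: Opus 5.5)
We follow the scheme of Theorem~\ref{Theorem RSFCF}: apply Theorem~\ref{Theorem 1} and then evaluate the two ambient quantities $\tau_{\mathcal V}^{M_1}(p)$ and $K_{\mathcal V}^{M_1}(\Pi)$ using the curvature tensor (\ref{eq-GCSF}) of the complex space form. Inequality (\ref{eq-GCFI-(1)}) holds for any Riemannian submersion. Substituting the computed values into its right-hand side will give (\ref{eq-GCSFMI}). The equality characterization is inherited verbatim from Theorem~\ref{Theorem 1}, since only the ambient curvature terms are being rewritten.

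\textbf{Key step: the Kulkarni--Nomizu products.} We evaluate (\ref{eq-GCSF}) on $(X,Y,Y,X)$ using (\ref{eq-KN-product}) and (\ref{eq-KN-symm-product}). For orthonormal $X,Y$, $\frac12(g\circledast g)(X,Y,Y,X)=1$, so the first term contributes $\frac{c}{4}$ per orthonormal pair. Since $J^{\flat}$ is skew ($J^\flat(X,X)=0$), the holomorphic part reduces to
\[
\tfrac{c}{4}\cdot 3\,\big(g_1(X,JY)\big)^2 .
\]
One should double-check this coefficient against the known formula $R(X,Y,Y,X)=\frac{c}{4}\{1+3g(X,JY)^2\}$ and fix the sign conventions so that exactly this value results.

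With $X=V_1$, $Y=V_2$ this gives
\[
K^{M_1}_{\mathcal V}(\Pi)=\tfrac{c}{4}\big(1+3g_1(V_1,JV_2)^2\big).
\]
By the decomposition (\ref{decompose_GCSF_RM}), $g_1(V_1,JV_2)=g_1(V_1,QV_2)$, because $PV_2$ is horizontal.

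Summing over $i\neq j$ in $\{1,\dots,r\}$ gives
\[
2\tau^{M_1}_{\mathcal V}(p)=\tfrac{c}{4}\Big(r(r-1)+3\sum_{i,j}g_1(V_i,QV_j)^2\Big)=\tfrac{c}{4}\big(r(r-1)+3\|Q\|^2\big),
\]
using the definition of $\|Q\|^2$; the diagonal terms vanish by skewness.

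\textbf{Combining.} The right-hand side of (\ref{eq-GCFI-(1)}) becomes
\[
\tfrac{c}{8}\big(r(r-1)-2\big)+\tfrac{3c}{8}\big(\|Q\|^2-2g_1(V_1,QV_2)^2\big)-\tfrac{r^2(r-2)}{2(r-1)}\|H\|^2 ,
\]
which is exactly the right-hand side of (\ref{eq-GCSFMI}) with the factor $\frac12$ pulled out.

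\textbf{Main obstacle.} The delicate part is the bookkeeping in the Kulkarni--Nomizu expansion: the signs in $\frac12(J^\flat\circledast J^\flat)-(J^\flat\circledcirc J^\flat)$ must produce the net coefficient $3$ on $g_1(X,JY)^2$, and the summation must correctly give $\|Q\|^2$ without double counting. I would verify this first by testing on a holomorphic plane $Y=JX$, where the sectional curvature must equal $c$.
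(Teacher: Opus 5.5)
Your proposal is correct and follows the paper's proof. Both compute $\tau^{M_1}_{\mathcal V}(p)=\frac{r(r-1)}{2}\frac{c}{4}+\frac{3c}{8}\|Q\|^2$ and $K^{M_1}_{\mathcal V}(\Pi)=\frac{c}{4}+\frac{3c}{4}g_1(V_1,QV_2)^2$ from (\ref{eq-GCSF}), substitute them into (\ref{eq-GCFI-(1)}), and take the equality case from Theorem~\ref{Theorem 1}. On the obstacle you flagged: the coefficient $3$ comes out only if $J^\flat\circledcirc J^\flat$ is read as the symmetric tensor product $2J^\flat(X,Y)J^\flat(Z,W)$, whereas taken literally (\ref{eq-KN-symm-product}) gives $2\,J^\flat\circledast J^\flat$ and hence $-3$, so anchoring the convention to the standard formula $\frac{c}{4}\{1+3g_1(X,JY)^2\}$ is the right fix.
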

\begin{proof}
By (\ref{eq-P-(9.2)}), (\ref{eq-P-(9.2.1)}) and (\ref{eq-GCSF}), we get 
\begin{equation}
\tau _{{\cal V}}^{M_{1}}(p)=\frac{r\left( r-1\right) }{2}\frac{c}{4}+\frac{3c%
}{8}\left\Vert Q\right\Vert ^{2},\quad K_{{\cal V}}^{M_{1}}\left( \Pi
\right) =\frac{c}{4}+\frac{3c}{4}\left( g_{1}\left( V_{1},QV_{2}\right)
\right) ^{2}.  \label{eq-GCSF-(1.1)}
\end{equation}
In view of (\ref{eq-GCFI-(1)}) and (\ref{eq-GCSF-(1.1)}), we get (\ref%
{eq-GCSFMI}).
\end{proof}
\begin{example}
\label{Exa-CSFEH} For the Riemannian submersion, example constructed in {\rm 
\cite[~Example ~31]{Singh_Meena_Meena_26_JMAA}}, we define compatible almost
complex structure $J$ on ${\Bbb R}^{6}$ as 
\[
J\left( x_{1},x_{2},x_{3},x_{4},x_{5},x_{6}\right) =\left(
-x_{2},x_{1},-x_{4},x_{3},-x_{6},x_{5}\right) . 
\]%
We observe that $({\Bbb R}^{6},g_{1},J)$ becomes a complex space form, and we
get 
\[
\left( T^{{\cal H}}\right) _{11}^{1}+\left( T^{{\cal H}}\right)
_{22}^{1}\neq \left( T^{{\cal H}}\right) _{33}^{1}. 
\]
Clearly, we observe that the inequality obtained in {\rm Theorem~\ref%
{Theorem CSFCF}} does not attain equality.
\end{example}
\begin{theorem}
\label{Theorem GSSFCF}Let $\pi :\left( {M_{1}}(c_{1},c_{2},c_{3}),g_{1}%
\right) \rightarrow \left( M_{2},g_{2}\right) $ be a Riemannian submersion
from a generalized Sasakian space form onto a Riemannian manifold with $\dim
M_{1}=n=2k+1$, $\dim M_{2}=m$ and $\dim {\cal V}_{p}=r>2$, then for any $2$%
-plane $\Pi \subset {\cal V}_{p}$,
\begin{enumerate}
\item[{\bf (i)}] if $\xi \in {\cal V}_{p}$, 
\begin{eqnarray}
\tau _{{\cal V}}^{{\cal V}}(p)-K_{{\cal V}}^{{\cal V}}(\Pi ) &\geq &\frac{%
c_{1}}{2}\left( r^{2}-r-2\right) +\frac{3}{2}c_{2}\left( \left\Vert
Q\right\Vert ^{2}-2\left( g_{1}\left( V_{1},QV_{2}\right) \right) ^{2}\right)
\nonumber \\
&&-c_{3}\left( \left( r-1\right) -\Theta (\Pi )\right) -\frac{r^{2}\left(
r-2\right) }{2\left( r-1\right) }\left\Vert H\right\Vert ^{2},
\label{eq-GSSFMI-1}
\end{eqnarray}

\item[{\bf (ii)}] if $\xi \in {\cal H}_{p}$,%
\begin{eqnarray}
\tau _{{\cal V}}^{{\cal V}}(p)-K_{{\cal V}}^{{\cal V}}(\Pi ) &\geq &\frac{%
c_{1}}{2}\left( r^{2}-r-2\right) +\frac{3}{2}c_{2}\left( \left\Vert
Q\right\Vert ^{2}-2\left( g_{1}\left( V_{1},QV_{2}\right) \right) ^{2}\right)
\nonumber \\
&&-\frac{r^{2}\left( r-2\right) }{2\left( r-1\right) }\left\Vert
H\right\Vert ^{2},  \label{eq-GSSFMI-2}
\end{eqnarray}%
where $\Theta (\Pi )=\left( \left( \eta \left( V_{1}\right) \right)
^{2}+\left( \eta \left( V_{2}\right) \right) ^{2}\right) $. The equality
holds in {\rm (\ref{eq-GSSFMI-1})} and {\rm (\ref{eq-GSSFMI-2})} if and only
if the tensor ${\cal T}^{{\cal H}}$ takes the form given by {\rm (\ref%
{eq-TH-matrix-1})} and {\rm (\ref{eq-TH-matrix-2})}.
\end{enumerate}
\end{theorem}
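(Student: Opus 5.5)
The plan is to follow the pattern of the proofs of Theorems~\ref{Theorem RSFCF} and \ref{Theorem CSFCF}: compute $\tau_{\cal V}^{M_1}(p)$ and $K_{\cal V}^{M_1}(\Pi)$ from the curvature expression (\ref{eq-GSSF}), then substitute them into the general inequality (\ref{eq-GCFI-(1)}) of Theorem~\ref{Theorem 1}. The equality characterization then transfers verbatim, because the curvature terms computed from (\ref{eq-GSSF}) are pointwise identities. Equality in (\ref{eq-GSSFMI-1}) or (\ref{eq-GSSFMI-2}) is therefore exactly equality in (\ref{eq-GCFI-(1)}), which Theorem~\ref{Theorem 1} characterizes by (\ref{eq-TH-matrix-1})--(\ref{eq-TH-matrix-2}).

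First I evaluate (\ref{eq-GSSF}) on $(V_i,V_j,V_j,V_i)$ using the Kulkarni--Nomizu formula (\ref{eq-KN-product}).

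\textbf{The $c_1$ term.} Since $\frac12(g\circledast g)(V_i,V_j,V_j,V_i)=1$ for $i\ne j$, this term contributes $c_1$ for each pair.

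\textbf{The $c_2$ term.} Since $\varphi^\flat$ is skew, $\varphi^\flat(V_j,V_j)=0$. Expanding $\frac12(\varphi^\flat\circledast\varphi^\flat)-(\varphi^\flat\circledcirc\varphi^\flat)$ then gives $3\,g_1(V_i,\varphi V_j)^2$. With $\varphi V_j=PV_j+QV_j$ and $V_i$ vertical, this equals $3\,g_1(V_i,QV_j)^2$.

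\textbf{The $c_3$ term.} Expanding $(g\circledast(\eta\otimes\eta))(V_i,V_j,V_j,V_i)$ gives $\eta(V_i)^2+\eta(V_j)^2$ for $i\ne j$.

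Summing over $i,j$ yields
\[
2\tau_{\cal V}^{M_1}(p)=c_1 r(r-1)+3c_2\Vert Q\Vert^2-2c_3(r-1)\sum_{i=1}^r\eta(V_i)^2,
\]
and
\[
K_{\cal V}^{M_1}(\Pi)=c_1+3c_2\,g_1(V_1,QV_2)^2-c_3\,\Theta(\Pi).
\]

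Next I split into the two cases of the statement.

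\textbf{Case (i), $\xi\in{\cal V}_p$.} Then $\sum_i\eta(V_i)^2=\Vert\xi\Vert^2=1$, so the $c_3$-part of $\tau_{\cal V}^{M_1}$ is $-c_3(r-1)$ and
\[
\tau_{\cal V}^{M_1}-K_{\cal V}^{M_1}(\Pi)=\frac{c_1}{2}(r^2-r-2)+\frac{3c_2}{2}\Bigl(\Vert Q\Vert^2-2g_1(V_1,QV_2)^2\Bigr)-c_3\bigl((r-1)-\Theta(\Pi)\bigr).
\]
Inserting this into (\ref{eq-GCFI-(1)}) gives (\ref{eq-GSSFMI-1}).

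\textbf{Case (ii), $\xi\in{\cal H}_p$.} Then $\eta(V_i)=g_1(V_i,\xi)=0$ for all $i$, so every $c_3$ contribution vanishes and (\ref{eq-GCFI-(1)}) gives (\ref{eq-GSSFMI-2}).

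The main obstacle is the bookkeeping in the $c_2$ computation. One has to check carefully that the combination $\frac12(\varphi^\flat\circledast\varphi^\flat)-(\varphi^\flat\circledcirc\varphi^\flat)$, expanded via (\ref{eq-KN-symm-product}), produces exactly the coefficient $3$. The sign conventions for $\varphi^\flat$ and the ordering $(V_i,V_j,V_j,V_i)$ matter here. I will settle this by comparing with the known value $K(X,\varphi X)=c_1+3c_2$ for unit $X\perp\xi$, just as the complex case produced the factor $\frac{3c}{4}$. The $c_3$ normalization also needs checking against the paper's convention, and I will test it with the standard formula $K(X,\xi)=c_1-c_3$.
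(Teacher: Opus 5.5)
Your proposal is correct and is essentially the paper's proof. You evaluate (\ref{eq-GSSF}) to obtain $\tau_{\cal V}^{M_1}(p)$ and $K_{\cal V}^{M_1}(\Pi)$ in the two cases $\xi\in{\cal V}_p$ and $\xi\in{\cal H}_p$, and your values agree with (\ref{eq-GSSFMI-(1.1)}) and (\ref{eq-GSSFMI-(1.2)}); you then substitute into (\ref{eq-GCFI-(1)}) and inherit the equality case from Theorem~\ref{Theorem 1}.

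Your caution about the $c_2$ coefficient is justified. With (\ref{eq-KN-symm-product}) taken literally, $\varphi^\flat\circledcirc\varphi^\flat=2\,\varphi^\flat\circledast\varphi^\flat$, so the direct expansion gives $-3\,g_1(V_i,\varphi V_j)^2$, not $+3$. It is your calibration against the standard value $K(X,\varphi X)=c_1+3c_2$ that actually produces the $+3$ the paper uses, so that step is required, not just a sanity check.
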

\begin{proof}
By (\ref{eq-P-(9.2)}), (\ref{eq-P-(9.2.1)}) and (\ref%
{eq-GSSF}), we get 
\begin{equation}
\tau _{{\cal V}}^{M_{1}}(p)=\left\{ 
\begin{array}{ll}
\frac{r\left( r-1\right) }{2}c_{1}+\frac{3}{2}c_{2}\left\Vert Q\right\Vert
^{2}-\left( r-1\right) c_{3} & {\rm if}~\xi \in {\cal V}_{p}; \\ 
\frac{r\left( r-1\right) }{2}c_{1}+\frac{3}{2}c_{2}\left\Vert Q\right\Vert
^{2} & {\rm if}~\xi \in {\rm {\cal H}}_{p},%
\end{array}%
\right.  \label{eq-GSSFMI-(1.1)}
\end{equation}%
and 
\begin{equation}
K_{{\cal V}}^{M_{1}}\left( \Pi \right) =\left\{ 
\begin{array}{ll}
\begin{array}{l}
c_{1}+3c_{2}\left( g_{1}\left( V_{1},QV_{2}\right) \right) ^{2} \\ 
-c_{3}\left( \left( \eta \left( V_{1}\right) \right) ^{2}+\left( \eta \left(
V_{2}\right) \right) ^{2}\right)%
\end{array}
& {\rm if}~\xi \in {\cal V}_{p}; \\ 
c_{1}+3c_{2}\left( g_{1}\left( V_{1},QV_{2}\right) \right) ^{2} & {\rm if}%
~\xi \in {\rm {\cal H}}_{p}.%
\end{array}%
\right.  \label{eq-GSSFMI-(1.2)}
\end{equation}%
In view of (\ref{eq-GCFI-(1)}), (\ref{eq-GSSFMI-(1.1)}) and (\ref%
{eq-GSSFMI-(1.2)}), we get (\ref{eq-GSSFMI-1}) and (\ref{eq-GSSFMI-2}). 
\end{proof}
\begin{example}
\label{Exa-GSSF} For the Riemannian submersion constructed in {\rm \cite[%
~Example ~32]{Singh_Meena_Meena_26_JMAA}}, it can be verified that the
inequality stated in {\rm Theorem~\ref{Theorem GSSFCF}} is satisfied with
equality.
\end{example}
\section{B.-Y. Chen inequality for Riemannian submersion along horizontal
distributions \label{section_Hor}}
\begin{theorem}
\label{Theorem-Horizontal Space}Let $\pi :(M_{1},g_{1})\rightarrow
(M_{2},g_{2})$ be a Riemannian submersion between two Riemannian manifolds
with $\dim M_{1}=n$ and $\dim M_{2}=m$. If $\dim {\cal H}_{p}=s>2$, then for
any $2$-plane ${\Bbb P}\subset {\cal H}_{p}$,%
\begin{equation}
\tau _{{\cal H}}^{{\cal H}}(p)-K_{{\cal H}}^{{\cal H}}({\Bbb P})\leq \tau _{%
{\cal H}}^{M_{1}}(p)-K_{{\cal H}}^{M_{1}}({\Bbb P}).
\label{eq-gen-Hori-Chen-First}
\end{equation}%
The equality holds in {\rm (\ref{eq-gen-Hori-Chen-First})} if and only if%
\begin{equation}
A_{1j}^{{\cal V}^{\alpha }}=0\quad {\rm for}\quad j\in \left\{ 3,\ldots
,s\right\} ,\quad \alpha \in \left\{ 1,\ldots ,r\right\} ,
\label{eq-hor-equality-1}
\end{equation}%
\begin{equation}
A_{ij}^{{\cal V}^{\alpha }}\quad {\rm for}\quad i,j\in \left\{ 2,\ldots
,s\right\} ,\quad \alpha \in \left\{ 1,\ldots ,r\right\} .
\label{eq-hor-equality-2}
\end{equation}
\end{theorem}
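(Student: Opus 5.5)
The plan is to compare the horizontal curvatures of $M_{1}$ and ${\cal H}$ directly, using O'Neill's relation (\ref{eq-P-(11)}). In contrast with Theorem~\ref{Theorem 1}, I expect no squared-mean-curvature term here. The difference should turn out to be a sum of squares of components of ${\cal A}^{{\cal V}}$, so Lemma~\ref{Lemma 2} will not be needed. Throughout I take $\{U_{1},\ldots ,U_{s}\}$ to be the orthonormal basis of ${\cal H}_{p}$ with ${\Bbb P}={\rm span}\{U_{1},U_{2}\}$, and write $A_{ij}^{{\cal V}^{\alpha }}=g_{1}({\cal A}_{U_{i}}^{{\cal V}}U_{j},V_{\alpha })$.

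\textbf{Step 1 (pointwise sectional curvatures).} Put $X=W=U_{i}$ and $Y=Z=U_{j}$ in (\ref{eq-P-(11)}). I will use ${\cal A}_{U_{j}}U_{j}=0$ and the skew-symmetry ${\cal A}_{U_{j}}U_{i}=-{\cal A}_{U_{i}}U_{j}$ on horizontal vectors. The term $g_{1}({\cal A}_{U_{j}}U_{j},{\cal A}_{U_{i}}U_{i})$ then vanishes. The term $-2g_{1}({\cal A}_{U_{i}}U_{j},{\cal A}_{U_{j}}U_{i})$ equals $2\Vert {\cal A}_{U_{i}}U_{j}\Vert ^{2}$, and the last term contributes another $\Vert {\cal A}_{U_{i}}U_{j}\Vert ^{2}$. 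This gives
\[
R^{M_{1}}(U_{i},U_{j},U_{j},U_{i})=R^{{\cal H}}(U_{i},U_{j},U_{j},U_{i})+3\sum_{\alpha =1}^{r}\left( A_{ij}^{{\cal V}^{\alpha }}\right) ^{2}.
\]
The sign bookkeeping in (\ref{eq-P-(11)}) is the only delicate point. I will check it against the classical O'Neill formula $K^{M_{1}}=K^{M_{2}}\circ \pi -3\Vert {\cal A}_{X}Y\Vert ^{2}$, which is consistent once $R^{{\cal H}}$ is read as the base curvature lifted horizontally.

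\textbf{Step 2 (summing).} Summing over $i,j$ and using (\ref{eq-P-(9.1)}), (\ref{eq-P-(9.2.1)}) and (\ref{eq-P-(14.1)}) gives $2\tau _{{\cal H}}^{M_{1}}=2\tau _{{\cal H}}^{{\cal H}}+3\Vert {\cal A}^{{\cal V}}\Vert ^{2}$. Taking $i=1$, $j=2$ gives $K_{{\cal H}}^{M_{1}}({\Bbb P})=K_{{\cal H}}^{{\cal H}}({\Bbb P})+3\sum_{\alpha }(A_{12}^{{\cal V}^{\alpha }})^{2}$. Subtracting these, and using the antisymmetry $A_{ij}=-A_{ji}$ together with $A_{ii}=0$, I obtain
\[
\left( \tau _{{\cal H}}^{M_{1}}-K_{{\cal H}}^{M_{1}}({\Bbb P})\right) -\left( \tau _{{\cal H}}^{{\cal H}}-K_{{\cal H}}^{{\cal H}}({\Bbb P})\right) =3\sum_{\alpha =1}^{r}\sum_{\substack{ 1\leq i<j\leq s \\ (i,j)\neq (1,2)}}\left( A_{ij}^{{\cal V}^{\alpha }}\right) ^{2}\geq 0,
\]
which is (\ref{eq-gen-Hori-Chen-First}).

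\textbf{Step 3 (equality).} Equality holds exactly when every $A_{ij}^{{\cal V}^{\alpha }}$ with $\{i,j\}\neq \{1,2\}$ vanishes. This means $A_{1j}^{{\cal V}^{\alpha }}=0$ for $j\geq 3$, which is (\ref{eq-hor-equality-1}). It also means $A_{ij}^{{\cal V}^{\alpha }}=0$ for all $i,j\in \{2,\ldots ,s\}$, which is (\ref{eq-hor-equality-2}) read as a vanishing condition; for $i=j$ this is automatic by skew-symmetry. Thus only $A_{12}^{{\cal V}^{\alpha }}=-A_{21}^{{\cal V}^{\alpha }}$ may be nonzero.

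The main obstacle is Step 1, namely fixing the sign conventions in (\ref{eq-P-(11)}) so that the coefficient comes out as $+3$. Once that is settled, the rest is bookkeeping.
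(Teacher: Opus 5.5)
Your proposal is the paper's proof. Substituting $X=W=U_i$, $Y=Z=U_j$ in \eqref{eq-P-(11)} gives the $+3\Vert \mathcal{A}_{U_i}U_j\Vert^{2}$ term, summing gives $\tau_{\mathcal{H}}^{\mathcal{H}}=\tau_{\mathcal{H}}^{M_1}-\frac{3}{2}\Vert \mathcal{A}^{\mathcal{V}}\Vert^{2}$, and removing the $(1,2)$ contribution leaves the paper's remainder $3\sum_{j\geq 3}\sum_{\alpha}(A_{1j}^{\mathcal{V}^{\alpha}})^{2}+\frac{3}{2}\sum_{i,j\geq 2}\sum_{\alpha}(A_{ij}^{\mathcal{V}^{\alpha}})^{2}$ (your $3\sum_{i<j,\,(i,j)\neq(1,2)}$), with \eqref{eq-hor-equality-2} read as a vanishing condition exactly as in the paper's proof.

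One correction to your side remark: the O'Neill sanity check contradicts Step~1 rather than confirming it. Under the convention $R(X,Y,Y,X)=K$ that the paper uses for space forms (e.g.\ $K_{\mathcal{H}}^{M_1}(\mathbb{P})=c$), your Step~1 says $K^{M_1}=K^{\mathcal{H}}+3\Vert \mathcal{A}_{U_i}U_j\Vert^{2}$, whereas O'Neill gives $K^{M_1}=K^{M_2}\circ\pi-3\Vert \mathcal{A}_XY\Vert^{2}$. The two agree only in O'Neill's own convention $R(X,Y,Y,X)=-K$; with the standard sign and $R^{\mathcal{H}}$ the lifted base curvature, the same bookkeeping yields $\geq$ instead of $\leq$. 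So the $+3$ rests solely on \eqref{eq-P-(11)} as printed, just as in the paper.
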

\begin{proof}
By (\ref{eq-P-(11)}), (\ref{eq-P-(9.1)}) and (\ref{eq-P-(9.2.1)}), we obtain 
\[
2\tau _{{\cal H}}^{{\cal H}}(p)=2\tau _{{\cal H}}^{M_{1}}(p)-3\sum_{\alpha
=1}^{r}\sum_{i,j=1}^{s}\left( A_{ij}^{{\cal V}^{\alpha }}\right) ^{2}, 
\]%
consequently, it follows that 
\begin{equation}
\tau _{{\cal H}}^{{\cal H}}(p)=\tau _{{\cal H}}^{M_{1}}(p)-\frac{3}{2}%
\left\Vert A^{{\cal V}}\right\Vert ^{2}.  \label{eq-scal-hori-Gauss}
\end{equation}%
On the other hand, from (\ref{eq-P-(9.2.0)}) and (\ref{eq-P-(14.1)}), we
obtain 
\begin{eqnarray}
\frac{3}{2}\left\Vert A^{{\cal V}}\right\Vert ^{2} &=&K_{{\cal H}}^{M_{1}}(%
{\Bbb P})-K_{{\cal H}}^{{\cal H}}({\Bbb P})+3\sum_{j=3}^{s}\sum_{\alpha
=1}^{r}\left( \left( A^{{\cal V}}\right) _{1j}^{\alpha }\right) ^{2}+\frac{3}{2}\sum_{i,j=2}^{s}\sum_{\alpha =1}^{r}\left( \left( A^{{\cal V}%
}\right) _{ij}^{\alpha }\right) ^{2},  \label{eq-3/2 AVsquare}
\end{eqnarray}%
Substituting the values from (\ref{eq-3/2 AVsquare}) into (\ref%
{eq-scal-hori-Gauss}), we deduce 
\begin{eqnarray}
\tau _{{\cal H}}^{{\cal H}}\left( p\right) -K_{{\cal H}}^{{\cal H}}({\Bbb P}%
) &=&\tau _{{\cal H}}^{M_{1}}-K_{{\cal H}}^{M_{1}}({\Bbb P}%
)-3\sum_{j=3}^{s}\sum_{\alpha =1}^{r}\left( A_{1j}^{{\cal V}^{\alpha
}}\right) ^{2} -\frac{3}{2}\sum_{i,j=2}^{s}\sum_{\alpha =1}^{r}\left( A_{ij}^{%
{\cal V}^{\alpha }}\right) ^{2}.  \label{eq-rel-scalH-scalM}
\end{eqnarray}%
From (\ref{eq-rel-scalH-scalM}), it follows immediately that 
\begin{equation}
\tau _{{\cal H}}^{{\cal H}}\left( p\right) -K_{{\cal H}}^{{\cal H}}({\Bbb P}%
)\leq \tau _{{\cal H}}^{M_{1}}(p)-K_{{\cal H}}^{M_{1}}({\Bbb P}).
\label{eq-Ine-ScalH-scalM}
\end{equation}%
The equality in (\ref{eq-Ine-ScalH-scalM}) holds if and only if 
\[
A_{1j}^{{\cal V}^{\alpha }}=0\quad {\rm for}\quad j\in \left\{ 3,\ldots
,s\right\} ,\quad \alpha \in \left\{ 1,\ldots ,r\right\} , 
\]%
\[
A_{ij}^{{\cal V}^{\alpha }}=0\quad {\rm for}\quad i,j\in \left\{ 2,\ldots
,s\right\} ,\quad \alpha \in \left\{ 1,\ldots ,r\right\} . 
\]%
This completes the proof.
\end{proof}
\begin{theorem}
Let $\pi :(M_{1},g_{1})\rightarrow (M_{2},g_{2})$ be a Riemannian submersion
between two Riemannian manifolds with $\dim M_{1}=n$ and $\dim M_{2}=m$. If $%
s=\dim {\cal H}_{p}>2$, then for $2$-plane ${\Bbb P}\subset {\cal H}_{p}$, 
\begin{equation}
\delta ^{{\cal H}}(2)\leq \;\tau _{{\cal H}}^{M_{1}}(p)-\inf \left\{ K_{%
{\cal H}}^{M_{1}}({\Bbb P})\right\} .  \label{eq-delta-H-1}
\end{equation}%
The equality holds in {\rm (\ref{eq-delta-H-1})} if and only if the tensor $%
A^{{\cal V}}$ satisfies {\rm (\ref{eq-hor-equality-1})} and {\rm (\ref%
{eq-hor-equality-2})}.
\end{theorem}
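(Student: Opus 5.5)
The plan is to derive the statement from Theorem~\ref{Theorem-Horizontal Space} by optimizing over the plane ${\Bbb P}$, in the same way the vertical $\hat{\delta}^{{\cal V}}(2)$ theorem was obtained from Theorem~\ref{Theorem 1}. Fix $p$ and the horizontal space ${\cal H}_{p}$. The quantities $\tau_{{\cal H}}^{{\cal H}}(p)$ and $\tau_{{\cal H}}^{M_{1}}(p)$ are traces over ${\cal H}_{p}$, so they do not depend on the choice of orthonormal basis or of the plane ${\Bbb P}$. Only $K_{{\cal H}}^{{\cal H}}({\Bbb P})$ and $K_{{\cal H}}^{M_{1}}({\Bbb P})$ vary with ${\Bbb P}$.

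The most transparent route is the exact identity (\ref{eq-rel-scalH-scalM}). For every ${\Bbb P}$ it gives
\[
K_{{\cal H}}^{{\cal H}}({\Bbb P})=K_{{\cal H}}^{M_{1}}({\Bbb P})+\tau _{{\cal H}}^{{\cal H}}(p)-\tau _{{\cal H}}^{M_{1}}(p)+S({\Bbb P}),
\]
where $S({\Bbb P})\geq 0$ is the sum of squares of the components of $A^{{\cal V}}$ in an adapted basis, as appearing there. Hence $K_{{\cal H}}^{{\cal H}}({\Bbb P})\geq K_{{\cal H}}^{M_{1}}({\Bbb P})+\tau _{{\cal H}}^{{\cal H}}(p)-\tau _{{\cal H}}^{M_{1}}(p)\geq \inf K_{{\cal H}}^{M_{1}}+\tau _{{\cal H}}^{{\cal H}}(p)-\tau _{{\cal H}}^{M_{1}}(p)$. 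Taking the infimum on the left over all $2$-planes ${\Bbb P}\subset {\cal H}_{p}$ and rearranging gives
\[
\tau _{{\cal H}}^{{\cal H}}(p)-\inf K_{{\cal H}}^{{\cal H}}\leq \tau _{{\cal H}}^{M_{1}}(p)-\inf K_{{\cal H}}^{M_{1}}.
\]
By (\ref{eq-delta-H-02}), the left-hand side is exactly $\delta^{{\cal H}}(2)$, which proves (\ref{eq-delta-H-1}).

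Equality is the main point of care. Because the Grassmannian of $2$-planes in ${\cal H}_{p}$ is compact and $K$ is continuous, both infima are attained. Suppose ${\Bbb P}_{0}$ realizes $\inf K_{{\cal H}}^{{\cal H}}$. If equality holds in (\ref{eq-delta-H-1}), then both inequalities in the chain above are equalities at ${\Bbb P}_{0}$. So $S({\Bbb P}_{0})=0$, which is (\ref{eq-hor-equality-1})--(\ref{eq-hor-equality-2}) in a basis adapted to ${\Bbb P}_{0}$, and ${\Bbb P}_{0}$ also minimizes $K_{{\cal H}}^{M_{1}}$.

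Conversely, suppose $A^{{\cal V}}$ satisfies those conditions for a plane ${\Bbb P}_{0}$ that minimizes $K_{{\cal H}}^{M_{1}}$. Then equality holds in (\ref{eq-rel-scalH-scalM}) at ${\Bbb P}_{0}$, and since $\inf K_{{\cal H}}^{{\cal H}}\leq K_{{\cal H}}^{{\cal H}}({\Bbb P}_{0})$ this reverses the inequality, forcing equality in (\ref{eq-delta-H-1}).

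The expected obstacle is therefore this equality bookkeeping. One has to state that the conditions on $A^{{\cal V}}$ are read in an orthonormal basis $\{U_{1},U_{2}\}$ spanning the extremal plane. I would make this explicit rather than imitate the looser phrasing of the vertical $\hat{\delta}^{{\cal V}}(2)$ theorem.
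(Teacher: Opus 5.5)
Your argument is correct and is essentially the paper's: the paper also takes the pointwise inequality of Theorem~\ref{Theorem-Horizontal Space} for every $\mathbb{P}\subset\mathcal{H}_p$ and passes to the supremum over $\mathbb{P}$, which is equivalent to your infimum of $K_{\mathcal H}^{\mathcal H}$, using that $\tau_{\mathcal H}^{\mathcal H}(p)$ and $\tau_{\mathcal H}^{M_1}(p)$ do not depend on $\mathbb{P}$.

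You go beyond the paper in the equality case, which it only dismisses with a reference to Theorem~\ref{Theorem-Horizontal Space}. Your compactness argument shows that (\ref{eq-hor-equality-1})--(\ref{eq-hor-equality-2}) must be read in a basis adapted to a plane attaining $\inf K_{\mathcal H}^{M_1}$. That qualification is genuinely needed. Suppose $\mathcal{A}^{\mathcal V}\neq 0$ is supported on a single plane $\mathbb{P}_1$, meaning $\mathcal{A}^{\mathcal V}_XY=0$ whenever $Y\perp\mathbb{P}_1$, and $\mathbb{P}_1$ does not minimize $K_{\mathcal H}^{M_1}$. Then the conditions hold for $\mathbb{P}_1$, yet equality fails.
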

\begin{proof}
From (\ref{eq-gen-Hori-Chen-First}), we have 
\begin{equation}
\tau _{{\cal H}}^{{\cal H}}(p)-K_{{\cal H}}^{{\cal H}}({\Bbb P})\leq \sup
\left\{ \tau _{{\cal H}}^{M_{1}}(p)-K_{{\cal H}}^{M_{1}}({\Bbb P})\right\} ,
\label{eq-delta-H-2}
\end{equation}%
that is $\tau _{{\cal H}}^{M_{1}}(p)-K_{{\cal H}}^{M_{1}}({\Bbb P})$ is a
upper bound for $\tau _{{\cal H}}^{{\cal H}}(p)-K_{{\cal H}}^{{\cal H}}(%
{\Bbb P})$. Hence, from (\ref{eq-delta-H-2}), we get 
\begin{equation}
\sup \left\{ \tau _{{\cal H}}^{{\cal H}}(p)-K_{{\cal H}}^{{\cal H}}({\Bbb P}%
)\right\} \leq \;\sup \left\{ \tau _{{\cal H}}^{M_{1}}(p)-K_{{\cal H}%
}^{M_{1}}({\Bbb P})\right\} .  \label{eq-delta-H-3}
\end{equation}%
Since for a fixed ${\cal H}_{p}$, $\tau _{{\cal H}}^{{\cal H}}(p)$ and $\tau
_{{\cal H}}^{M_{1}}(p)$ are real constants. Thus, from (\ref{eq-delta-H-3}),
we have 
\[
\tau _{{\cal H}}^{{\cal H}}(p)-\inf \left\{ K_{{\cal H}}^{{\cal H}}({\Bbb P}%
)\right\} \leq \;\tau _{{\cal H}}^{M_{1}}(p)-\inf \left\{ K_{{\cal H}%
}^{M_{1}}({\Bbb P})\right\} . 
\]%
In view of (\ref{eq-delta-H-02}), we get (\ref{eq-delta-H-1}). The equality
case follows Theorem {\rm \ref{Theorem-Horizontal Space}}.
\end{proof}
\begin{example}
For the Riemannian submersion, the example constructed in {\rm \cite[%
~Example ~31]{Singh_Meena_Meena_26_JMAA}}, we observe that the inequality
obtained in Theorem {\ref{Theorem-Horizontal Space}} attains equality.
\end{example}
Now, we apply {\rm Theorem \ref{Theorem-Horizontal Space}} to various cases
of Riemannian submersions, as given below.
\begin{theorem}
\label{Th_real_hor} Let $\pi :(M_{1},g_{1})\rightarrow (M_{2},g_{2})$ be a
Riemannian submersion real space form of constant sectional curvature $c$
onto a Riemannian manifold with $\dim M_{1}=n$ and $\dim M_{2}=m$. If $\dim 
{\cal H}_{p}=s>2$, then for any $2$-plane ${\Bbb P}\subset {\cal H}_{p}$, 
\begin{equation}
\tau _{{\cal H}}^{{\cal H}}\left( p\right) -K_{{\cal H}}^{{\cal H}}({\Bbb P}%
)\leq \frac{1}{2}\left( s^{2}-s-2\right) c.  \label{eq-hor-cf-real}
\end{equation}%
The equality holds in {\rm (\ref{eq-hor-cf-real})} if and only if the tensor 
$A^{{\cal V}}$ satisfies {\rm (\ref{eq-hor-equality-1})} and {\rm (\ref%
{eq-hor-equality-2})}.
\end{theorem}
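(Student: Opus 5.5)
The plan is to specialize Theorem~\ref{Theorem-Horizontal Space} to the case where the total space is a real space form. Theorem~\ref{Theorem-Horizontal Space} already gives
\[
\tau _{{\cal H}}^{{\cal H}}(p)-K_{{\cal H}}^{{\cal H}}({\Bbb P})\leq \tau _{{\cal H}}^{M_{1}}(p)-K_{{\cal H}}^{M_{1}}({\Bbb P}),
\]
with equality exactly when $A^{{\cal V}}$ satisfies (\ref{eq-hor-equality-1}) and (\ref{eq-hor-equality-2}). So it only remains to evaluate the right-hand side using the curvature tensor (\ref{eq-RSF}). This parallels the proof of Theorem~\ref{Theorem RSFCF}, with the horizontal orthonormal frame $\{U_{1},\ldots ,U_{s}\}$ in place of the vertical one.

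First I compute $R^{M_{1}}(U_{i},U_{j},U_{j},U_{i})$ from (\ref{eq-RSF}) and the Kulkarni--Nomizu product (\ref{eq-KN-product}). Taking $T_{1}=T_{2}=g$ gives
\[
\tfrac{c}{2}(g\circledast g)(X,Y,Y,X)=c\{g(Y,Y)g(X,X)-g(X,Y)^{2}\}.
\]
For orthonormal $U_{i},U_{j}$ this equals $c(1-\delta _{ij})$. Summing over $i,j=1,\ldots ,s$ as in (\ref{eq-P-(9.2.1)}) gives $2\tau _{{\cal H}}^{M_{1}}(p)=s(s-1)c$, and taking $i=1$, $j=2$ in (\ref{eq-P-(9.2.0)}) gives $K_{{\cal H}}^{M_{1}}({\Bbb P})=c$. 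Therefore
\[
\tau _{{\cal H}}^{M_{1}}(p)-K_{{\cal H}}^{M_{1}}({\Bbb P})=\tfrac{s(s-1)}{2}c-c=\tfrac{1}{2}(s^{2}-s-2)c.
\]
Substituting this into (\ref{eq-gen-Hori-Chen-First}) yields (\ref{eq-hor-cf-real}).

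For the equality case, the substitution changes only the right-hand side and does so by an identity. Hence equality in (\ref{eq-hor-cf-real}) holds if and only if equality holds in (\ref{eq-gen-Hori-Chen-First}). By Theorem~\ref{Theorem-Horizontal Space}, that happens if and only if (\ref{eq-hor-equality-1}) and (\ref{eq-hor-equality-2}) hold; more directly, it is equivalent to the vanishing of the nonnegative sums in (\ref{eq-rel-scalH-scalM}).

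No step is a real obstacle. The only points needing care are sign and normalization conventions: one must check that $\frac{c}{2}(g\circledast g)$ evaluated on $(U_{1},U_{2},U_{2},U_{1})$ gives $+c$ rather than $-c$ or $2c$, consistent with the vertical computation (\ref{eq-RSF-(1.1)}).
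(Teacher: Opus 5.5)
\textbf{Verdict: there is a genuine gap, but it is inherited from the paper.} Your reduction is step for step the paper's own proof: you take (\ref{eq-gen-Hori-Chen-First}) from Theorem~\ref{Theorem-Horizontal Space}, compute $\tau^{M_1}_{\cal H}(p)=\frac{s(s-1)}{2}c$ and $K^{M_1}_{\cal H}({\Bbb P})=c$ from (\ref{eq-RSF}), and carry over the equality case. That space-form computation is correct.

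The gap is that the sign check you flagged has to be done on the inequality you import, and there it fails. You confirmed that (\ref{eq-RSF}) fixes the convention $R(X,Y,Y,X)=+K(X,Y)$. Formula (\ref{eq-P-(11)}), however, is O'Neill's identity written in the opposite convention.
\begin{itemize}
\item Evaluated on $(X,Y,X,Y)$, it gives $R^{M_1}(X,Y,X,Y)=R^{\cal H}(X,Y,X,Y)-3\Vert{\cal A}_XY\Vert^2$. This is O'Neill's $K^{M_1}=K^{\cal H}-3\Vert{\cal A}_XY\Vert^2$: a horizontal plane has at least as much curvature in the base as in $M_1$.
\item Evaluated on $(U_1,U_2,U_2,U_1)$, as in the proof of Theorem~\ref{Theorem-Horizontal Space}, it gives $K^{M_1}_{\cal H}({\Bbb P})=K^{\cal H}_{\cal H}({\Bbb P})+3\Vert{\cal A}_{U_1}U_2\Vert^2$, which has the wrong sign.
\end{itemize}
Redoing (\ref{eq-scal-hori-Gauss})--(\ref{eq-rel-scalH-scalM}) with the correct sign flips every ${\cal A}$-term:
\begin{align*}
\tau^{\cal H}_{\cal H}(p)-K^{\cal H}_{\cal H}({\Bbb P})
&=\tau^{M_1}_{\cal H}(p)-K^{M_1}_{\cal H}({\Bbb P})
+3\sum_{j=3}^{s}\sum_{\alpha=1}^{r}\bigl(({\cal A}^{\cal V})^{\alpha}_{1j}\bigr)^{2}\\
&\quad+\frac{3}{2}\sum_{i,j=2}^{s}\sum_{\alpha=1}^{r}\bigl(({\cal A}^{\cal V})^{\alpha}_{ij}\bigr)^{2}.
\end{align*}
So (\ref{eq-gen-Hori-Chen-First}) actually holds with $\geq$, not $\leq$.

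Take $R^{\cal H}$ to be the lifted curvature of the base, which is what O'Neill's formula refers to. Then the statement as written is false, and no argument can establish it. Your computation, once the sign is fixed, proves instead
\[
\tau^{\cal H}_{\cal H}(p)-K^{\cal H}_{\cal H}({\Bbb P})\geq\frac{1}{2}(s^2-s-2)c,
\]
with equality exactly when (\ref{eq-hor-equality-1}) and (\ref{eq-hor-equality-2}) hold (the latter read with ``$=0$'').

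The paper's own Example~\ref{Exa-RSFEH} shows the failure. For $\pi:{\Bbb S}^{15}(1)\to{\Bbb S}^{8}(\frac{1}{2})$ one has $s=8$, $c=1$ and base curvature $4$. Hence $\tau^{\cal H}_{\cal H}(p)-K^{\cal H}_{\cal H}({\Bbb P})=28\cdot 4-4=108$, while $\frac{1}{2}(s^2-s-2)c=27$. This is consistent with the corrected identity: there $\Vert{\cal A}_{U_i}U_j\Vert^2=1$ for $i\neq j$, and $27+3\cdot 6+\frac{3}{2}\cdot 42=108$.
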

\begin{proof} Using (\ref{eq-P-(9.2.0)}), (\ref{eq-P-(9.2.1)}) and (%
\ref{eq-RSF}), we obtain 
\begin{equation}
\tau^{M_{1}}_{{\cal H}}(p) = \frac{s(s-1)}{2}\,c, \qquad K^{M_{1}}_{{\cal H}%
}({\Bbb P}) = c.  \label{eq-scal-real}
\end{equation}
Substituting the values from (\ref{eq-scal-real}) into (\ref%
{eq-Ine-ScalH-scalM}), we directly obtain (\ref{eq-hor-cf-real}). 
\end{proof}
\begin{example}
Consider the Riemannian submersion constructed in {\rm \cite[Example~31]%
{Singh_Meena_Meena_26_JMAA}}. It can be verified that this example satisfies
all the assumptions of {\rm Theorem~\ref{Th_real_hor}}. Moreover, the
inequality established in this theorem is attained as an equality for this
particular case.
\end{example}
\begin{theorem}
\label{Th_complex_hor} Let $\pi :\left( {M_{1}}(c),g_{1}\right) \rightarrow
\left( M_{2},g_{2}\right) $ be a Riemannian submersion\ from complex space
form of constant holomorphic sectional curvature $c$ onto a Riemannian
manifold with$\dim {\cal H}_{p}=s>2$, then for any $2$-plane ${\Bbb P}%
\subset {\cal H}_{p}$, 
\begin{eqnarray}
\tau _{{\cal H}}^{{\cal H}}\left( p\right) -K_{{\cal H}}^{{\cal H}}({\Bbb P}%
) &\leq &\frac{1}{2}\left( s^{2}-s-2\right) c+\frac{3c}{8}\left( \left\Vert P\right\Vert ^{2}-2\left( g_{1}\left(
U_{1},PU_{2}\right) \right) ^{2}\right) .  \label{eq-hor-cf-complex}
\end{eqnarray}%
The equality holds in {\rm (\ref{eq-hor-cf-complex})} if and only if the
tensor $A^{{\cal V}}$ satisfies {\rm (\ref{eq-hor-equality-1})} and {\rm (%
\ref{eq-hor-equality-2})}.
\end{theorem}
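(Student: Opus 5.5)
The proof follows the pattern of Theorem~\ref{Th_real_hor}. We compute the two ambient quantities $\tau_{\cal H}^{M_1}(p)$ and $K_{\cal H}^{M_1}({\Bbb P})$ from the curvature formula (\ref{eq-GCSF}) of a complex space form. We then substitute them into the general horizontal inequality (\ref{eq-Ine-ScalH-scalM}) established in Theorem~\ref{Theorem-Horizontal Space}. The equality characterization is inherited verbatim, because (\ref{eq-rel-scalH-scalM}) is an identity. Its defect terms are exactly $3\sum_{j\ge 3}\sum_\alpha (A_{1j}^{{\cal V}^\alpha})^2+\frac32\sum_{i,j\ge 2}\sum_\alpha (A_{ij}^{{\cal V}^\alpha})^2$, and substituting the values of the ambient invariants does not touch them. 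So equality holds if and only if (\ref{eq-hor-equality-1}) and (\ref{eq-hor-equality-2}) hold.

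The key step is evaluating (\ref{eq-GCSF}) on horizontal orthonormal vectors. By (\ref{eq-KN-product}), $\frac{c}{8}(g\circledast g)(U_i,U_j,U_j,U_i)=\frac{c}{4}$ for $i\neq j$. For the $J^\flat$ terms I will use $J^\flat(U_i,U_j)=g_1(U_i,JU_j)=g_1(U_i,PU_j)$, where $P$ is the horizontal part in (\ref{decompose_GCSF_RM}), together with $J^\flat(U_i,U_i)=0$ by skew-symmetry. Then only the mixed terms survive, and the bracket in (\ref{eq-GCSF}) contributes a multiple of $(g_1(U_i,PU_j))^2$. This reproduces the standard formula
\[
R^{M_1}(U_i,U_j,U_j,U_i)=\frac c4\bigl(1+3(g_1(U_i,PU_j))^2\bigr),
\]
mirroring (\ref{eq-GCSF-(1.1)}) in the vertical case.

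Summing over $i,j=1,\dots,s$ and using $\|P\|^2=\sum_{i,j}(g_1(PU_i,U_j))^2$ gives
\[
\tau_{\cal H}^{M_1}(p)=\frac{s(s-1)}{2}\frac c4+\frac{3c}{8}\|P\|^2,\qquad K_{\cal H}^{M_1}({\Bbb P})=\frac c4+\frac{3c}{4}(g_1(U_1,PU_2))^2 .
\]
Subtracting and inserting into (\ref{eq-Ine-ScalH-scalM}) yields
\[
\tau_{\cal H}^{\cal H}(p)-K_{\cal H}^{\cal H}({\Bbb P})\le \frac{c}{8}(s^2-s-2)+\frac{3c}{8}\bigl(\|P\|^2-2(g_1(U_1,PU_2))^2\bigr).
\]

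The main obstacle is the normalization of the leading term. My computation, consistent with the paper's own vertical-case formula (\ref{eq-GCSF-(1.1)}) and with Theorem~\ref{Theorem CSFCF}, gives the coefficient $\frac{c}{8}(s^2-s-2)$, not the $\frac12(s^2-s-2)c$ appearing in (\ref{eq-hor-cf-complex}). I would therefore recheck the Kulkarni--Nomizu sign conventions in (\ref{eq-KN-product}) and (\ref{eq-KN-symm-product}), so that the holomorphic sectional curvature comes out as exactly $c$. Assuming the intended statement matches the vertical analogue, the argument above proves (\ref{eq-hor-cf-complex}) with coefficient $\frac c8$. If the stated $\frac12$ is genuinely intended, the claimed bound exceeds the sharp one by $\frac{3c}{8}(s^2-s-2)$. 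For $c\ge 0$ it still follows from the sharp bound, but the "if and only if" equality clause would then fail unless that extra term vanishes.
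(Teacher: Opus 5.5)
Your proposal is correct and is essentially the paper's own proof: the paper computes your values $\tau^{M_1}_{\cal H}(p)=\frac{s(s-1)}{2}\frac{c}{4}+\frac{3c}{8}\|P\|^2$ and $K^{M_1}_{\cal H}(\mathbb{P})=\frac{c}{4}+\frac{3c}{4}(g_1(U_1,PU_2))^2$ in (\ref{eq-scal-complex}) and substitutes them into (\ref{eq-Ine-ScalH-scalM}), with the equality case inherited from the identity (\ref{eq-rel-scalH-scalM}). The coefficient discrepancy you flagged is real, but it is a misprint in the statement rather than a Kulkarni--Nomizu convention issue, since the leading term comes only from $\frac{c}{8}(g\circledast g)$: substituting (\ref{eq-scal-complex}) gives $\frac{c}{8}(s^2-s-2)$, which matches the vertical analogue Theorem~\ref{Theorem CSFCF}, so the printed $\frac12(s^2-s-2)c$ was presumably carried over from Theorem~\ref{Th_real_hor}.
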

\begin{proof} 
Using (\ref{eq-GCSF}), (\ref{eq-P-(9.2.1)}) and (\ref%
{eq-P-(9.2.0)}), we obtain 
\begin{equation}
\tau _{{\cal H}}^{M_{1}}(p)=\frac{s\left( s-1\right) }{2}\frac{c}{4}+\frac{3c%
}{8}\left\Vert P\right\Vert ^{2},\quad K_{{\cal H}}^{M_{1}}({\Bbb P})=\frac{c%
}{4}+\frac{3c}{4}\left( g_{1}\left( U_{1},PU_{2}\right) \right) ^{2}.
\label{eq-scal-complex}
\end{equation}%
In view of (\ref{eq-Ine-ScalH-scalM}) and (\ref{eq-scal-complex}), we get (%
\ref{eq-hor-cf-complex}). Substituting (\ref{eq-scal-complex}) into (\ref%
{eq-Ine-ScalH-scalM}), we directly obtain (\ref{eq-hor-cf-complex}). 
\end{proof}
\begin{example}
Consider the Riemannian submersion constructed in {\rm \cite[Example~31]%
{Singh_Meena_Meena_26_JMAA}}. Define a compatible almost complex structure $%
J $ on ${\Bbb R}^{6}$ by 
\[
J\left( x_{1},x_{2},x_{3},x_{4},x_{5},x_{6}\right)
=\left(-x_{2},x_{1},-x_{4},x_{3},-x_{6},x_{5}\right). 
\]
Then $({\Bbb R}^{6}, g_{1}, J)$ becomes a complex space form. It can be
verified that this example satisfies all the assumptions of {\rm Theorem~\ref%
{Th_complex_hor}}. Furthermore, the inequality established in this theorem
is attained as an equality for this case.
\end{example}

\begin{theorem}
\label{Th_GSSF_hor} Let $\pi :\left( {M_{1}}(c_{1},c_{2},c_{3}),g_{1}\right)
\rightarrow \left( M_{2},g_{2}\right) $ be a Riemannian submersion from a
generalized Sasakian space form onto a Riemannian manifold with $\dim {\cal H%
}_{p}=s>2$, then for any $2$-plane ${\Bbb P}\subset {\cal H}_{p}$,

\begin{enumerate}
\item[{\bf (i)}] if $\xi \in {\cal V}_{p}$, then we have%
\begin{eqnarray}
\tau _{{\cal H}}^{{\cal H}}\left( p\right) -K_{{\cal H}}^{{\cal H}}({\Bbb P}%
) &\leq &\frac{c_{1}}{2}\left( s^{2}-s-2\right) +\frac{3}{2}c_{2}\left( \left\Vert P\right\Vert ^{2}-2\left(
g_{1}\left( U_{1},PU_{2}\right) \right) ^{2}\right),
\label{eq-hor-cf-GSSF(a)}
\end{eqnarray}

\item[{\bf (ii)}] if $\xi \in {\cal H}_{p}$, then we have%
\begin{eqnarray}
\tau _{{\cal H}}^{{\cal H}}\left( p\right) -K_{{\cal H}}^{{\cal H}}({\Bbb P}%
) &\leq &\frac{c_{1}}{2}\left( s^{2}-s-2\right)+\frac{3c_{2}}{2}\left( \left\Vert P\right\Vert ^{2}-2\left(
g_{1}\left( U_{1},PU_{2}\right) \right) ^{2}\right) \nonumber \\
&&\left. -c_{3}\left( \left( s-1\right) -{\bf \gamma }\left( {\Bbb P}\right)
\right) \right. ,  \label{eq-hor-cf-GSSF(b)}
\end{eqnarray}%
where ${\bf \gamma }\left( {\Bbb P}\right) =\left( \left( \eta \left(
U_{1}\right) \right) ^{2}+\left( \eta \left( U_{2}\right) \right)
^{2}\right) $. The equality holds in {\rm (\ref{eq-hor-cf-GSSF(a)}) }and 
{\rm (\ref{eq-hor-cf-GSSF(a)})} if and only if the tensor $A^{{\cal V}}$
satisfies {\rm (\ref{eq-hor-equality-1})} and {\rm (\ref{eq-hor-equality-2})}%
.
\end{enumerate}
\end{theorem}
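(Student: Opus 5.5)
The plan is to follow exactly the pattern of Theorems \ref{Th_real_hor} and \ref{Th_complex_hor}. The whole proof reduces to computing the two ambient quantities $\tau_{\cal H}^{M_1}(p)$ and $K_{\cal H}^{M_1}({\Bbb P})$ from the curvature expression (\ref{eq-GSSF}). We then substitute them into the general horizontal inequality (\ref{eq-Ine-ScalH-scalM}) of Theorem \ref{Theorem-Horizontal Space}, i.e.
\[
\tau _{{\cal H}}^{{\cal H}}(p)-K_{{\cal H}}^{{\cal H}}({\Bbb P})\leq \tau _{{\cal H}}^{M_{1}}(p)-K_{{\cal H}}^{M_{1}}({\Bbb P}).
\]
The equality characterization is then inherited verbatim from Theorem \ref{Theorem-Horizontal Space}. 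Equality holds iff the $A^{\cal V}$ terms dropped in (\ref{eq-rel-scalH-scalM}) vanish, which is (\ref{eq-hor-equality-1})--(\ref{eq-hor-equality-2}). The reason is that the substitution changes only the right-hand side by an identity.

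For the computation, we evaluate (\ref{eq-GSSF}) on $(U_i,U_j,U_j,U_i)$ with $i,j\in\{1,\dots,s\}$ using the Kulkarni--Nomizu formula (\ref{eq-KN-product}).
\begin{itemize}
\item The term $\frac12 c_1(g\circledast g)$ gives $c_1$ for each $i\neq j$.
\item The $\varphi^\flat$ block gives $3c_2\,(g_1(U_i,\varphi U_j))^2 = 3c_2\,(g_1(U_i,PU_j))^2$, since only the horizontal part $P$ of $\varphi$ from (\ref{decompose_GSSF_RM}) pairs with $U_i$.
\item The term $-c_3\,g\circledast(\eta\otimes\eta)$ gives $-c_3\big((\eta(U_i))^2+(\eta(U_j))^2\big)$.
\end{itemize}
Summing over $i,j$, with the paper's normalization $\|P\|^2=\sum_{i,j}(g_1(PU_i,U_j))^2$, yields the two values needed.
\[
\tau_{\cal H}^{M_1}(p)=\tfrac{s(s-1)}{2}c_1+\tfrac32 c_2\|P\|^2-(s-1)c_3\sum_{i}(\eta(U_i))^2,
\qquad
K_{\cal H}^{M_1}({\Bbb P})=c_1+3c_2(g_1(U_1,PU_2))^2-c_3\,\gamma({\Bbb P}).
\]

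We then split into the two cases of the statement.
\begin{itemize}
\item If $\xi\in{\cal V}_p$, then $\eta(U_i)=g_1(U_i,\xi)=0$ for all $i$, so all $c_3$ terms disappear. The difference is $\frac{c_1}{2}(s^2-s-2)+\frac32c_2(\|P\|^2-2(g_1(U_1,PU_2))^2)$, giving (\ref{eq-hor-cf-GSSF(a)}).
\item If $\xi\in{\cal H}_p$, then $\sum_i(\eta(U_i))^2=\|\xi\|^2=1$. The $c_3$ contribution to the difference is $-c_3\big((s-1)-\gamma({\Bbb P})\big)$, giving (\ref{eq-hor-cf-GSSF(b)}). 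This mirrors the vertical case (\ref{eq-GSSFMI-(1.1)})--(\ref{eq-GSSFMI-(1.2)}) with the roles of ${\cal V}$ and ${\cal H}$ exchanged.
\end{itemize}

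The only step needing care is the bookkeeping of constants in the $\varphi^\flat$ and $\eta\otimes\eta$ blocks of (\ref{eq-GSSF}) under the symmetric product $\circledcirc$. First, we verify that the combination $\frac12(\varphi^\flat\circledast\varphi^\flat)-(\varphi^\flat\circledcirc\varphi^\flat)$ evaluated on an orthonormal pair gives exactly $3(g_1(U_1,\varphi U_2))^2$; the antisymmetry of $\varphi^\flat$ must be used in this check. Second, we verify that $g\circledast(\eta\otimes\eta)$ gives $(\eta(U_1))^2+(\eta(U_2))^2$. I would check both against the vertical formulas (\ref{eq-GSSFMI-(1.1)})--(\ref{eq-GSSFMI-(1.2)}), which the paper has already recorded, rather than recompute from scratch. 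With these two checks done, the rest is direct substitution.
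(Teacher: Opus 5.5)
Your proposal is correct relative to the paper and is exactly its proof: evaluate (\ref{eq-GSSF}) on orthonormal horizontal pairs to obtain (\ref{eq-scal-GSSF-Hor})--(\ref{eq-sec-GSSF-Hor}) in the two cases $\xi\in{\cal V}_{p}$ and $\xi\in{\cal H}_{p}$, substitute into (\ref{eq-Ine-ScalH-scalM}), and inherit the equality case from Theorem~\ref{Theorem-Horizontal Space}. There are two caveats, both inherited from the paper rather than introduced by you: first, taking $\circledcirc$ literally from (\ref{eq-KN-symm-product}) gives $\varphi^{\flat}\circledcirc\varphi^{\flat}=2\,\varphi^{\flat}\circledast\varphi^{\flat}$, so the check you propose would return $-3c_{2}(g_{1}(U_{1},\varphi U_{2}))^{2}$ for the $c_{2}$-block, and the coefficient $+3c_{2}$ in fact presupposes $(T\circledcirc T)(X,Y,Z,W)=2T(X,Y)T(Z,W)$, i.e.\ the standard generalized Sasakian curvature tensor; second, (\ref{eq-P-(11)}) at $(U_{1},U_{2},U_{2},U_{1})$ gives $K_{{\cal H}}^{M_{1}}({\Bbb P})=K_{{\cal H}}^{{\cal H}}({\Bbb P})+3\Vert{\cal A}_{U_{1}}U_{2}\Vert^{2}$, the opposite of O'Neill's $K^{M_{2}}(\pi_{\ast}X,\pi_{\ast}Y)=K^{M_{1}}(X,Y)+3\Vert{\cal A}_{X}Y\Vert^{2}$, so that with $R^{{\cal H}}$ the lifted base curvature the inequality (\ref{eq-Ine-ScalH-scalM}), and with it (\ref{eq-hor-cf-GSSF(a)})--(\ref{eq-hor-cf-GSSF(b)}), actually holds with $\geq$ (for the Hopf fibration $S^{5}(1)\to{\Bbb C}P^{2}(4)$, where $c_{1}=1$, $c_{2}=c_{3}=0$, $s=4$ and $\xi$ is vertical, the left side of (\ref{eq-hor-cf-GSSF(a)}) is at least $8$ while the right side is $5$).
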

\begin{proof}
Using (\ref{eq-P-(9.2.0)}), (\ref{eq-P-(9.2.1)}) and (%
\ref{eq-GSSF}), we obtain 
\begin{equation}
\tau _{{\cal H}}^{M_{1}}(p)=\left\{ 
\begin{array}{ll}
\frac{s\left( s-1\right) }{2}c_{1}+\frac{3}{2}c_{2}\left\Vert P\right\Vert
^{2} & {\rm if}~\xi \in {\cal V}_{p}; \\ 
\frac{s\left( s-1\right) }{2}c_{1}+\frac{3}{2}c_{2}\left\Vert P\right\Vert
^{2}-\left( s-1\right) c_{3} & {\rm if}~\xi \in {\rm {\cal H}}_{p},%
\end{array}%
\right.  \label{eq-scal-GSSF-Hor}
\end{equation}%
and 
\begin{equation}
K_{{\cal H}}^{M_{1}}\left( {\Bbb P}\right) =\left\{ 
\begin{array}{ll}
c_{1}+3c_{2}\left( g_{1}\left( U_{1},PU_{2}\right) \right) ^{2} & {\rm if}%
~\xi \in {\cal V}_{p}; \\ 
\begin{array}{l}
c_{1}+3c_{2}\left( g_{1}\left( U_{1},PU_{2}\right) \right) ^{2} \\ 
-c_{3}\left( \left( \eta \left( U_{1}\right) \right) ^{2}+\left( \eta \left(
U_{2}\right) \right) ^{2}\right)%
\end{array}
& {\rm if}~\xi \in {\rm {\cal H}}_{p}.%
\end{array}%
\right.  \label{eq-sec-GSSF-Hor}
\end{equation}%
Substituting the values from (\ref{eq-scal-GSSF-Hor}) and (\ref%
{eq-sec-GSSF-Hor}) into (\ref{eq-Ine-ScalH-scalM}), we directly obtain (\ref%
{eq-hor-cf-GSSF(a)}) and (\ref{eq-hor-cf-GSSF(b)}).
\end{proof}
\begin{example}
Consider the Riemannian submersion constructed in {\rm \cite[Example~32]%
{Singh_Meena_Meena_26_JMAA}}. It can be verified that this example satisfies
all the assumptions of {\rm Theorem~\ref{Th_GSSF_hor}}. Moreover, the
inequality established in this theorem is attained as an equality for this
case.
\end{example}
\section{B.-Y. Chen inequality for Riemannian submersion along mixed distributions \label{Section 4}}
We begin with the following:
\begin{theorem}
\label{Theorem GCFVH}Let $\pi :(M_{1},g_{1})\rightarrow (M_{2},g_{2})$ be a
Riemannian submersion between Riemannian manifolds with $\dim M_{1}=n$ and $%
\dim M_{2}=m$. If $\dim {\cal V}_{p}=r>2$, $\dim {\cal H}_{p}=s>2$, then for
any $2$-planes $\Pi \subset {\cal V}_{p}$ and ${\Bbb P}\subset {\cal H}_{p}$%
, 
\begin{eqnarray}
\tau ^{M_{1}}(p)-K_{{\cal V}}^{M_{1}}(\Pi )-K_{{\cal H}}^{M_{1}}({\Bbb P})
&\leq &\tau _{{\cal H}}^{{\cal H}}(p)-K_{{\cal H}}^{{\cal H}}({\Bbb P})+\tau
_{{\cal V}}^{{\cal V}}(p)-K_{{\cal V}}^{{\cal V}}(\Pi )  \nonumber \\
&&+\frac{r^{2}\left( r-2\right) }{2\left( r-1\right) }\left\Vert
H\right\Vert ^{2}-\breve{\delta}\left( N\right) +3\sum_{j=3}^{s}\sum_{\alpha
=1}^{r}\left( \left( {\cal A}^{{\cal V}}\right) _{1j}^{\alpha }\right) ^{2} 
\nonumber \\
&&+\frac{3}{2}\sum_{i,j=2}^{s}\sum_{\alpha =1}^{r}\left( \left( {\cal A}^{%
{\cal V}}\right) _{ij}^{\alpha }\right) ^{2}-\frac{1}{2}\left\Vert {\cal A}^{%
{\cal H}}\right\Vert +^{2}\frac{1}{2}\left\Vert T^{{\cal V}}\right\Vert ^{2},
\label{eq-GIVHCF}
\end{eqnarray}%
where $\tau ^{M_{1}}(p)=\tau _{{\cal V}}^{M_{1}}(p)+\tau _{{\cal H}%
}^{M_{1}}(p)+\sum_{i=1}^{s}\sum_{j=1}^{r}R^{M_{1}}\left(
U_{i},V_{j},V_{j},U_{i}\right) $. The equality holds in {\rm (\ref{eq-GIVHCF}%
)} if and only if the tensor ${\cal T}^{{\cal H}}$ takes the form given by 
{\rm (\ref{eq-TH-matrix-1})} and {\rm (\ref{eq-TH-matrix-2})}.
\end{theorem}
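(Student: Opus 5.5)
We split the total scalar curvature into its three blocks using (\ref{eq-scal-M1}), $\tau^{M_1}=\tau^{M_1}_{\cal V}+\tau^{M_1}_{\cal H}+\sum_{i,j}R^{M_1}(U_i,V_j,V_j,U_i)$, and bound each block separately.

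\textbf{Vertical block.} Inequality (\ref{eq-GCFI-(1)}) of Theorem~\ref{Theorem 1}, rearranged, gives
\[
\tau^{M_1}_{\cal V}(p)-K^{M_1}_{\cal V}(\Pi)\le \tau^{\cal V}_{\cal V}(p)-K^{\cal V}_{\cal V}(\Pi)+\frac{r^2(r-2)}{2(r-1)}\Vert H\Vert^2 .
\]
Its equality case is exactly the matrix form (\ref{eq-TH-matrix-1})--(\ref{eq-TH-matrix-2}).

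\textbf{Horizontal block.} I would use the identity (\ref{eq-rel-scalH-scalM}) from the proof of Theorem~\ref{Theorem-Horizontal Space}, not the inequality. It gives
\[
\tau^{M_1}_{\cal H}-K^{M_1}_{\cal H}(\Bbb P)=\tau^{\cal H}_{\cal H}-K^{\cal H}_{\cal H}(\Bbb P)+3\sum_{j\ge 3}\sum_\alpha ((A^{\cal V})^\alpha_{1j})^2+\frac32\sum_{i,j\ge2}\sum_\alpha((A^{\cal V})^\alpha_{ij})^2 .
\]
Since this is an exact equality, the two ${\cal A}^{\cal V}$-sums that appear in (\ref{eq-GIVHCF}) come from this step. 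It also explains why the equality condition in the theorem involves only ${\cal T}^{\cal H}$.

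\textbf{Mixed block.} Set $X=Y=U_i$ and $F_1=F_2=V_j$ in (\ref{eq-P-(12)}) and sum over $i,j$. Using the sign convention $R(X,F,Y,F)=-R(X,F,F,Y)$:
\begin{itemize}
\item The $\nabla{\cal T}$ term, by (\ref{eq-P-(9.3.1)}), produces $\breve\delta(N)$.
\item The term $-g({\cal T}_{V_j}U_i,{\cal T}_{V_j}U_i)$ produces $\Vert{\cal T}^{\cal V}\Vert^2$.
\item The term $g({\cal A}_{U_i}V_j,{\cal A}_{U_i}V_j)$ produces $\Vert{\cal A}^{\cal H}\Vert^2$.
\item The $\nabla{\cal A}$ term needs separate handling. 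Its trace $\sum_{i,j} g((\nabla_{V_j}{\cal A})(U_i,U_i),V_j)$ vanishes, because ${\cal A}_U U=0$ and, after expanding the covariant derivative, the remaining terms cancel by skew-symmetry of ${\cal A}$ on horizontal vectors and the orthogonality relations.
\end{itemize}
This yields
\[
\sum_{i,j}R^{M_1}(U_i,V_j,V_j,U_i)=-\breve\delta(N)+\Vert{\cal T}^{\cal V}\Vert^2-\Vert{\cal A}^{\cal H}\Vert^2 ,
\]
up to the paper's conventions.

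This is the step I expect to be the main obstacle: pinning down signs and the coefficient $\tfrac12$. The statement has $-\tfrac12\Vert{\cal A}^{\cal H}\Vert^2+\tfrac12\Vert{\cal T}^{\cal V}\Vert^2$, whereas the direct trace gives coefficients $1$. So I would first recompute carefully with the normalizations (\ref{eq-P-(14)})--(\ref{eq-P-(14.1)}) and the symmetric double counting. If the honest coefficient is $1$, I would still obtain the stated form as an upper bound only if the discrepancy has a definite sign. Otherwise I would record the identity with the correct coefficients, since the equality characterization is unaffected.

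\textbf{Assembly.} Adding the three pieces gives (\ref{eq-GIVHCF}). The only inequality used is the one from Theorem~\ref{Theorem 1}, so equality in (\ref{eq-GIVHCF}) holds if and only if ${\cal T}^{\cal H}$ has the form (\ref{eq-TH-matrix-1})--(\ref{eq-TH-matrix-2}).
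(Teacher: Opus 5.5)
\paragraph{Comparison with the paper.} Your three-block decomposition is the paper's own argument. The paper's (\ref{eq-TMVH-(2)}) is meant to be (\ref{eq-(3)}), plus twice (\ref{eq-scal-hori-Gauss}), plus twice your traced form of (\ref{eq-P-(12)}). The global $\varepsilon$ of (\ref{eq-TMVH-(4)}) then just reruns the proof of Theorem~\ref{Theorem 1}, which you cite as a black box instead. Your version is cleaner, and it avoids the paper's tacit use of a single horizontal frame both with $U_1\parallel H$ and with ${\Bbb P}={\rm span}\{U_1,U_2\}$. Your handling of the $\nabla\mathcal{A}$ trace is also correct, and the paper drops that term without comment: $(\nabla_V\mathcal{A})_XX=-\mathcal{A}_X(v\nabla_VX)$ is horizontal, so it pairs to zero with $V$.

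\paragraph{The coefficient.} This is where the argument breaks, and your suspicion is right: it cannot be repaired.
\begin{enumerate}
\item[(a)] The mixed block enters $2\tau^{M_1}$ twice, so (\ref{eq-TMVH-(2)}) should contain $-2\breve{\delta}(N)+2\Vert\mathcal{T}^{\mathcal{V}}\Vert^{2}-2\Vert\mathcal{A}^{\mathcal{H}}\Vert^{2}$. The paper doubles only the $\breve{\delta}$ term. Dividing $\varepsilon$ by $2$ then produces the printed $\frac12$'s.
\item[(b)] This factor does not depend on sign conventions. With O'Neill's signs the trace is $\breve{\delta}(N)+\Vert\mathcal{A}^{\mathcal{H}}\Vert^{2}-\Vert\mathcal{T}^{\mathcal{V}}\Vert^{2}$, again with coefficients $1$.
\item[(c)] The difference between what your pieces give and (\ref{eq-GIVHCF}) is $\frac12(\Vert\mathcal{T}^{\mathcal{V}}\Vert^{2}-\Vert\mathcal{A}^{\mathcal{H}}\Vert^{2})$, which has no definite sign.
\end{enumerate}

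\paragraph{A configuration where the printed form fails.} On ${\Bbb R}^s\times{\Bbb R}^r$ with $r,s\geq 3$, take $g_1=\sum_{\alpha}dx_\alpha^2+e^{2\phi(x)}dy_1^2+e^{-2\phi(x)}dy_2^2+\sum_{k\geq3}dy_k^2$, projected onto the $x$-space.
\begin{enumerate}
\item[(a)] Then $\mathcal{A}=0$ and the fibres are minimal.
\item[(b)] Taking $U_1\perp\nabla\phi$, the tensor $\mathcal{T}^{\mathcal{H}}$ has the form (\ref{eq-TH-matrix-1})--(\ref{eq-TH-matrix-2}) with $a=b=0$. Hence Theorem~\ref{Theorem 1} is an equality for $\Pi={\rm span}\{\partial_{y_1},\partial_{y_2}\}$.
\item[(c)] Using the paper's own formulas (\ref{eq-P-(10)})--(\ref{eq-P-(12)}), the printed (\ref{eq-GIVHCF}) then fails by $\frac12\Vert\mathcal{T}^{\mathcal{V}}\Vert^{2}=\vert\nabla\phi\vert^{2}>0$ wherever $\nabla\phi\neq0$.
\end{enumerate}

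\paragraph{What your argument actually proves.} Your final paragraph should not claim (\ref{eq-GIVHCF}) as printed. What your argument proves is (\ref{eq-GIVHCF}) with $-\Vert\mathcal{A}^{\mathcal{H}}\Vert^{2}+\Vert\mathcal{T}^{\mathcal{V}}\Vert^{2}$ in place of the halved terms. The paper's argument proves the same statement once the doubling is fixed. The equality case is unchanged: since every other step is an identity, equality holds exactly when it holds in Theorem~\ref{Theorem 1}.
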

\begin{proof} We have the scalar curvature $\tau ^{M_{1}}(p)$ of $%
M_{1}$ \cite{Aytimur_Ozgur_21_JGP} 
\begin{eqnarray}
\tau ^{M_{1}}(p) &=&\sum_{1\leq i<j\leq r}R^{M_{1}}\left(
V_{i},V_{j},V_{j},V_{i}\right) +\sum_{1\leq i<j\leq s}R^{M_{1}}\left(
U_{i},U_{j},U_{j},U_{i}\right)   \nonumber \\
&&+\sum_{i=1}^{s}\sum_{j=1}^{r}R^{M_{1}}\left(
U_{i},V_{j},V_{j},U_{i}\right) .  \label{eq-TMVH-(1)}
\end{eqnarray}%
By using (\ref{eq-P-(9.2.1)}) in (\ref{eq-TMVH-(1)}), we have 
\begin{equation}
\tau ^{M_{1}}(p)=\tau _{{\cal V}}^{M_{1}}(p)+\tau _{{\cal H}%
}^{M_{1}}(p)+\sum_{i=1}^{s}\sum_{j=1}^{r}R^{M_{1}}\left(
U_{i},V_{j},V_{j},U_{i}\right) .  \label{eq-TMVH-(1.1)}
\end{equation}%
On the other hand, from (\ref{eq-P-(10)}), (\ref{eq-P-(11)}), (\ref%
{eq-P-(12)}), (\ref{eq-TMVH-(1.1)}) and using (\ref{eq-P-(9)}) and (\ref%
{eq-P-(9.1)}), we obtain 
\begin{eqnarray}
2\tau ^{M_{1}}(p) &=&2\tau _{{\cal H}}^{{\cal H}}(p)+2\tau _{{\cal V}}^{%
{\cal V}}(p)+r^{2}\left\Vert H\right\Vert ^{2}+3\sum_{i,j=1}^{s}g\left(
A_{U_{i}}U_{j},A_{U_{i}}U_{j}\right)   \nonumber \\
&&-\sum_{t=1}^{s}\sum_{i,j=1}^{r}\left( T_{ij}^{t}\right)
^{2}-2\sum_{j=1}^{r}\sum_{i=1}^{s}g\left( \left( \nabla _{U_{i}}T\right)
\left( V_{j},V_{j}\right) ,U_{i}\right)   \nonumber \\
&&+\sum_{j=1}^{r}\sum_{i=1}^{s}\left\{ g\left(
T_{V_{j}}U_{i},T_{V_{j}}U_{i}\right) -g\left(
A_{U_{i}}V_{j},A_{U_{i}}V_{j}\right) \right\} .  \label{eq-TMVH-(2)}
\end{eqnarray}%
By using (\ref{eq-P-(9.3.1)}), (\ref{eq-P-(14)}) and (\ref{eq-P-(14.1)}), we
obtain 
\begin{equation}
2\tau ^{M_{1}}(p)=2\tau _{{\cal H}}^{{\cal H}}(p)+2\tau _{{\cal V}}^{{\cal V}%
}(p)+r^{2}\left\Vert H\right\Vert ^{2}+3\left\Vert {\cal A}^{{\cal V}%
}\right\Vert ^{2}-\left\Vert T^{{\cal H}}\right\Vert ^{2}-2\breve{\delta}%
\left( N\right) +\left\Vert T^{{\cal V}}\right\Vert ^{2}-\left\Vert {\cal A}%
^{{\cal H}}\right\Vert ^{2}.  \label{eq-TMVH-(3)}
\end{equation}%
Define 
\begin{equation}
\varepsilon =2\tau ^{M_{1}}(p)-2\tau _{{\cal H}}^{{\cal H}}(p)-2\tau _{{\cal %
V}}^{{\cal V}}(p)-\frac{r^{2}\left( r-2\right) }{\left( r-1\right) }%
\left\Vert H\right\Vert ^{2}-3\left\Vert {\cal A}^{{\cal V}}\right\Vert
^{2}+2\breve{\delta}\left( N\right) -\left\Vert T^{{\cal V}}\right\Vert
^{2}+\left\Vert {\cal A}^{{\cal H}}\right\Vert ^{2}.  \label{eq-TMVH-(4)}
\end{equation}%
From (\ref{eq-TMVH-(3)}) and (\ref{eq-TMVH-(4)}), it follows that 
\begin{equation}
r^{2}\Vert H\Vert ^{2}=(r-1)\left( \varepsilon +\left\Vert T^{{\cal H}%
}\right\Vert ^{2}\right) .  \label{eq-TMVH-(5)}
\end{equation}%
By carrying out computations similar to those presented in Theorem~\ref%
{Theorem 1} (from equation~(\ref{eq-(5)}) onward), we obtain 
\begin{equation}
K_{{\cal V}}^{{\cal V}}(\Pi )\leq K_{{\cal V}}^{M_{1}}(\Pi )-\tfrac{%
\varepsilon }{2}.  \label{eq-TMVH-(8)}
\end{equation}%
From (\ref{eq-P-(11)}), (\ref{eq-P-(9.2.0)}) and (\ref{eq-P-(14.1)}), we
have 
\begin{equation}
\frac{3}{2}\left\Vert {\cal A}^{{\cal V}}\right\Vert ^{2}=K_{{\cal H}%
}^{M_{1}}({\Bbb P})-K_{{\cal H}}^{{\cal H}}({\Bbb P})+3\sum_{j=3}^{s}\sum_{%
\alpha =1}^{r}\left( \left( {\cal A}^{{\cal V}}\right) _{1j}^{\alpha
}\right) ^{2}+\frac{3}{2}\sum_{i,j=2}^{s}\sum_{\alpha =1}^{r}\left( \left( 
{\cal A}^{{\cal V}}\right) _{ij}^{\alpha }\right) ^{2}.  \label{eq-TMVH-(9)}
\end{equation}%
In view of (\ref{eq-TMVH-(4)}), (\ref{eq-TMVH-(8)}) and (\ref{eq-TMVH-(9)}),
we get (\ref{eq-GIVHCF}). If the equality in (\ref{eq-GIVHCF}) holds, then
the inequalities given by (\ref{eq-(6)}) and (\ref{eq-TMVH-(8)}) become
equalities. In this case, we observe that the tensor ${\cal T}^{{\cal H}}$
takes the form given by {\rm (\ref{eq-TH-matrix-1})} and {\rm (\ref%
{eq-TH-matrix-2})}.
\end{proof}
\begin{remark}
In {\rm Examples \ref{Exa-GIRMEDNH}} and {\rm \ref{Exa-GIGSEH}}, we observe
that the maps $\pi $ satisfy the assumptions of {\rm Theorem \ref{Theorem
GCFVH}}. Moreover, the inequality derived in {\rm Theorem \ref{Theorem GCFVH}%
} attains equality in {\rm Example \ref{Exa-GIGSEH}}, while it does not
achieve equality in {\rm Example \ref{Exa-GIRMEDNH}}.
\end{remark}
Now, we apply {\rm Theorem \ref{Theorem GCFVH}} to various cases of
Riemannian submersions, as given below.
\begin{theorem}
\label{Theorem RSFCFHV}Let $\pi :(M_{1},g_{1})\rightarrow (M_{2},g_{2})$ be
a Riemannian submersions from real space form of constant sectional
curvature $c$ onto a Riemannian manifold with $\dim M_{1}=n$ and $\dim
M_{2}=m$. If $\dim {\cal V}_{p}=r>2$, $\dim {\cal H}_{p}=s>2$, then for any $%
2$-planes $\Pi \subset {\cal V}_{p}$ and ${\Bbb P}\subset {\cal H}_{p}$, 
\begin{eqnarray}
\frac{c}{2}\left( r^{2}+s^{2}+2sr-s-r-4\right)  &\leq &\tau _{{\cal H}}^{%
{\cal H}}-K_{{\cal H}}^{{\cal H}}({\Bbb P})+\tau _{{\cal V}}^{{\cal V}}-K_{%
{\cal V}}^{{\cal V}}(\Pi )  \nonumber \\
&&+\frac{r^{2}\left( r-2\right) }{2\left( r-1\right) }\left\Vert
H\right\Vert ^{2}-\breve{\delta}\left( N\right) +3\sum_{j=3}^{s}\sum_{\alpha
=1}^{r}\left( \left( {\cal A}^{{\cal V}}\right) _{1j}^{\alpha }\right) ^{2} 
\nonumber \\
&&+\frac{3}{2}\sum_{i,j=2}^{s}\sum_{\alpha =1}^{r}\left( \left( {\cal A}^{%
{\cal V}}\right) _{ij}^{\alpha }\right) ^{2}-\frac{1}{2}\left\Vert {\cal A}^{%
{\cal H}}\right\Vert ^{2}+\frac{1}{2}\left\Vert T^{{\cal V}}\right\Vert ^{2}.
\label{eqVHRSF-(1)}
\end{eqnarray}%
The equality holds in {\rm (\ref{eqVHRSF-(1)})} if and only if the tensor $%
{\cal T}^{{\cal H}}$ takes the form given by {\rm (\ref{eq-TH-matrix-1})}
and {\rm (\ref{eq-TH-matrix-2})}.
\end{theorem}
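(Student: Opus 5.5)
The plan is to specialize Theorem~\ref{Theorem GCFVH} to the case where the total space is a real space form, exactly as Theorem~\ref{Theorem RSFCF} and Theorem~\ref{Th_real_hor} specialized the vertical and horizontal inequalities. No new estimate is needed. The whole argument reduces to evaluating the left-hand side of (\ref{eq-GIVHCF}) via (\ref{eq-RSF}) and substituting.

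First, I will compute every curvature quantity on the left of (\ref{eq-GIVHCF}) from $R^{M_1}=\frac{c}{2}(g\circledast g)$. By (\ref{eq-KN-product}), for orthonormal $X,Y$ we get $R^{M_1}(X,Y,Y,X)=c$. This holds whether $X,Y$ are both vertical, both horizontal, or one of each. Hence:
\begin{itemize}
\item $\tau^{M_1}_{\cal V}(p)=\frac{r(r-1)}{2}c$, and $K^{M_1}_{\cal V}(\Pi)=c$, as in (\ref{eq-RSF-(1.1)});
\item $\tau^{M_1}_{\cal H}(p)=\frac{s(s-1)}{2}c$, and $K^{M_1}_{\cal H}({\Bbb P})=c$, as in (\ref{eq-scal-real});
\item the mixed term is $\sum_{i=1}^{s}\sum_{j=1}^{r}R^{M_1}(U_i,V_j,V_j,U_i)=rsc$.
\end{itemize}
Then by (\ref{eq-scal-M1}),
\[
\tau^{M_1}(p)=\frac{r(r-1)+s(s-1)}{2}c+rsc=\frac{c}{2}\left(r^2+s^2+2rs-r-s\right),
\]
which is simply $\frac{n(n-1)}{2}c$ with $n=r+s$. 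Subtracting $K^{M_1}_{\cal V}(\Pi)+K^{M_1}_{\cal H}({\Bbb P})=2c$ gives
\[
\frac{c}{2}\left(r^2+s^2+2sr-s-r-4\right).
\]

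Next, I will insert this value into (\ref{eq-GIVHCF}). The right-hand side of (\ref{eq-GIVHCF}) carries over verbatim, with the evident typo $-\frac12\Vert{\cal A}^{\cal H}\Vert+^{2}$ read as $-\frac12\Vert{\cal A}^{\cal H}\Vert^2+$. The hypotheses $r>2$ and $s>2$ are exactly those of Theorem~\ref{Theorem GCFVH}, so (\ref{eqVHRSF-(1)}) follows directly.

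For the equality case, the substitution step is an identity. Equality in (\ref{eqVHRSF-(1)}) therefore holds if and only if equality holds in (\ref{eq-GIVHCF}). By Theorem~\ref{Theorem GCFVH}, this happens precisely when ${\cal T}^{\cal H}$ has the form (\ref{eq-TH-matrix-1})--(\ref{eq-TH-matrix-2}).

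The only point requiring care is the mixed sectional curvatures. I must confirm that the normalization in (\ref{eq-RSF}) indeed yields $R^{M_1}(U_i,V_j,V_j,U_i)=c$, and not $-c$. This is a one-line check: from (\ref{eq-KN-product}) with $T_1=T_2=g$, one gets $(g\circledast g)(X,Y,Y,X)=2\left(g(Y,Y)g(X,X)-g(X,Y)^2\right)=2$. Otherwise the proof is purely a substitution.
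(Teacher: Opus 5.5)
Your proposal is correct and is exactly the paper's argument: evaluate $\tau^{M_1}_{\mathcal V}$, $K^{M_1}_{\mathcal V}(\Pi)$, $\tau^{M_1}_{\mathcal H}$, $K^{M_1}_{\mathcal H}(\mathbb{P})$ and the mixed sum $\sum_{i,j}R^{M_1}(U_i,V_j,V_j,U_i)=csr$ from (\ref{eq-RSF}), which are (\ref{eq-RSF-(1.1)}), (\ref{eq-scal-real}) and (\ref{eq-VHRSF-(1.1)}) in the paper, and substitute them into (\ref{eq-GIVHCF}). You also note that this substitution is an identity, so the equality case is inherited verbatim from Theorem~\ref{Theorem GCFVH}; this only makes explicit what the paper leaves implicit.
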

\begin{proof}
By (\ref{eq-RSF}), we get 
\begin{equation}
\sum_{i=1}^{s}\sum_{j=1}^{r}R^{M_{1}}\left( U_{i},V_{j},V_{j},U_{i}\right)
=csr.  \label{eq-VHRSF-(1.1)}
\end{equation}%
In view of (\ref{eq-RSF-(1.1)}), (\ref{eq-scal-real}), (\ref{eq-GIVHCF}) and
(\ref{eq-VHRSF-(1.1)}), we get (\ref{eqVHRSF-(1)}).
\end{proof}
\begin{remark}
In {\rm Example \ref{Exa-RSFEH}}, we see that the map $\pi $ satisfies the
assumptions of {\rm Theorem \ref{Theorem RSFCFHV}}. Moreover, the inequality
given in {\rm Theorem \ref{Theorem RSFCFHV}} attains equality in this
example.
\end{remark}
\begin{theorem}
\label{Theorem CSFCFHV}Let $\pi :\left( {M_{1}}(c),g_{1}\right) \rightarrow
\left( M_{2},g_{2}\right) $ be a Riemannian submersion from a complex space
form of constant holomorphic sectional curvature $c$ onto a Riemannian
manifold with $\dim M_{1}=n=2k$, $\dim M_{2}=m.$ If $\dim {\cal V}_{p}=r>2$, 
$\dim {\cal H}_{p}=s>2$, then for any $2$-planes $\Pi \subset {\cal V}_{p}$
and ${\Bbb P}\subset {\cal H}_{p}$, 
\begin{eqnarray}
&&\left. \frac{c}{8}\left( r^{2}+s^{2}+2sr-s-r-4\right) +\frac{3c}{8}\left(
\left\Vert Q\right\Vert ^{2}+\left\Vert P\right\Vert ^{2}+2\left\Vert P^{%
{\cal V}}\right\Vert ^{2}\right. \right.   \nonumber \\
&&\left. \left. -2\left( g_{1}\left( V_{1},QV_{2}\right) \right)
^{2}-2\left( g_{1}\left( U_{1},PU_{2}\right) \right) ^{2}\right) \leq \tau _{%
{\cal H}}^{{\cal H}}-K_{{\cal H}}^{{\cal H}}({\Bbb P})+\tau _{{\cal V}}^{%
{\cal V}}-K_{{\cal V}}^{{\cal V}}(\Pi )\right.   \nonumber \\
&&\left. +\frac{r^{2}\left( r-2\right) }{2\left( r-1\right) }\left\Vert
H\right\Vert ^{2}-\breve{\delta}\left( N\right) +3\sum_{j=3}^{s}\sum_{\alpha
=1}^{r}\left( \left( {\cal A}^{{\cal V}}\right) _{1j}^{\alpha }\right) ^{2}+%
\frac{3}{2}\sum_{i,j=2}^{s}\sum_{\alpha =1}^{r}\left( \left( {\cal A}^{{\cal %
V}}\right) _{ij}^{\alpha }\right) ^{2}\right.   \nonumber \\
&&\left. -\frac{1}{2}\left\Vert {\cal A}^{{\cal H}}\right\Vert ^{2}+\frac{1}{%
2}\left\Vert T^{{\cal V}}\right\Vert ^{2},\right.   \label{eqVHCSF-(1)}
\end{eqnarray}%
where $\left\Vert P^{{\cal V}}\right\Vert ^{2}=\sum_{j=1}^{r}\left\Vert
PV_{j}\right\Vert ^{2}=\sum_{i=1}^{s}\sum_{j=1}^{r}\left(
g(U_{i},JV_{j})\right) ^{2}$. The equality holds in {\rm (\ref{eqVHCSF-(1)})}
if and only if the tensor ${\cal T}^{{\cal H}}$ takes the form given by {\rm %
(\ref{eq-TH-matrix-1})} and {\rm (\ref{eq-TH-matrix-2})}.
\end{theorem}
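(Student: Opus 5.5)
The plan is to specialize Theorem~\ref{Theorem GCFVH} to a complex space form, exactly as Theorem~\ref{Theorem RSFCFHV} specialized it to a real space form. Inequality (\ref{eq-GIVHCF}) already has the required right-hand side, and its equality case is the stated one. So all the work is in computing the left-hand side
\[
\tau^{M_{1}}(p)-K_{{\cal V}}^{M_{1}}(\Pi)-K_{{\cal H}}^{M_{1}}({\Bbb P})
\]
from (\ref{eq-GCSF}). By (\ref{eq-TMVH-(1.1)}), $\tau^{M_1}(p)$ splits into a vertical part, a horizontal part and a mixed part. The vertical and horizontal pieces, and the two sectional curvatures, are already known: we use (\ref{eq-GCSF-(1.1)}) for $\tau_{\cal V}^{M_1}$ and $K_{\cal V}^{M_1}(\Pi)$, and (\ref{eq-scal-complex}) for $\tau_{\cal H}^{M_1}$ and $K_{\cal H}^{M_1}({\Bbb P})$.

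The remaining step is the mixed sum $\sum_{i=1}^{s}\sum_{j=1}^{r}R^{M_{1}}(U_{i},V_{j},V_{j},U_{i})$. Expanding (\ref{eq-GCSF}) on orthonormal pairs $X,Y$ with the Kulkarni--Nomizu product (\ref{eq-KN-product}) and the symmetric product (\ref{eq-KN-symm-product}) gives
\[
R^{M_1}(X,Y,Y,X)=\frac{c}{4}\left(1+3g_1(X,JY)^2\right).
\]
This is the same formula that produced (\ref{eq-GCSF-(1.1)}). Take $X=U_i$, $Y=V_j$ and sum. We have $g_1(U_i,JV_j)=g_1(U_i,PV_j)$ by the decomposition (\ref{decompose_GCSF_RM}), so the mixed sum equals
\[
\frac{c}{4}\,sr+\frac{3c}{4}\,\Vert P^{\cal V}\Vert^2 .
\]

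Now add the pieces. The constant terms are
\[
\frac{c}{8}\bigl(r(r-1)+s(s-1)+2rs-2-2\bigr)=\frac{c}{8}\bigl(r^2+s^2+2sr-s-r-4\bigr).
\]
The $J$-dependent terms are
\[
\frac{3c}{8}\Bigl(\Vert Q\Vert^2+\Vert P\Vert^2+2\Vert P^{\cal V}\Vert^2-2g_1(V_1,QV_2)^2-2g_1(U_1,PU_2)^2\Bigr).
\]
Together these are exactly the left side of (\ref{eqVHCSF-(1)}). Substituting them into (\ref{eq-GIVHCF}) yields the inequality. The equality statement is inherited verbatim from Theorem~\ref{Theorem GCFVH}, because the substitution is an identity and changes nothing in the estimate.

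The main obstacle is bookkeeping the normalizations in (\ref{eq-GCSF}). First, each of $\Vert Q\Vert^2$ and $\Vert P\Vert^2$ counts ordered pairs, so halving the double sum gives the coefficient $3c/8$ in the scalar curvatures. The mixed sum, by contrast, is over unordered-type pairs $(U_i,V_j)$ already, which produces the factor $2$ in front of $\Vert P^{\cal V}\Vert^2$ after factoring out $3c/8$. Second, the horizontal constant in (\ref{eq-hor-cf-complex}) was written with $c$ rather than $c/4$; this has to be checked against (\ref{eq-scal-complex}), and I will use the latter (coefficient $c/8$), which matches the claimed statement. I would double-check the coefficient of $g(X,JY)^2$ once, directly on the pair $(V_1,V_2)$, against (\ref{eq-GCSF-(1.1)}).
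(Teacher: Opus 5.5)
Your proposal is correct and is the paper's own proof: the paper computes $\sum_{i=1}^{s}\sum_{j=1}^{r}R^{M_{1}}(U_{i},V_{j},V_{j},U_{i})=\frac{c}{4}sr+\frac{3c}{4}\Vert P^{{\cal V}}\Vert^{2}$ from (\ref{eq-GCSF}), substitutes it together with (\ref{eq-GCSF-(1.1)}) and (\ref{eq-scal-complex}) into (\ref{eq-GIVHCF}), and inherits the equality case unchanged. Your constant bookkeeping is right, including taking the $c/4$ normalization from (\ref{eq-scal-complex}) rather than the $c$ printed in (\ref{eq-hor-cf-complex}); anchoring the coefficient $+3$ of $g_{1}(X,JY)^{2}$ to (\ref{eq-GCSF-(1.1)}) is the correct check, because a literal expansion with the paper's stated definition (\ref{eq-KN-symm-product}) of $\circledcirc$ would give $-3$ instead.
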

\begin{proof}
By (\ref{eq-GCSF}), we obtain%
\begin{equation}
\sum_{i=1}^{s}\sum_{j=1}^{r}R^{M_{1}}\left( U_{i},V_{j},V_{j},U_{i}\right) =%
\frac{c}{4}sr+\frac{3c}{4}\left\Vert P^{{\cal V}}\right\Vert ^{2}.
\label{eqVHCSF-(2)}
\end{equation}%
In view of (\ref{eq-GCSF-(1.1)}), (\ref{eq-scal-complex}), (\ref{eq-GIVHCF})
and (\ref{eqVHCSF-(2)}), we get (\ref{eqVHCSF-(1)}).
\end{proof}
\begin{remark}
In Example {\rm \ref{Exa-CSFEH}}, we see that the map $\pi $ satisfies the
assumptions of {\rm Theorem \ref{Theorem CSFCFHV}}. Moreover, the inequality
given in {\rm Theorem \ref{Theorem CSFCFHV}} attains equality in this
example.
\end{remark}
\begin{theorem}
\label{Theorem GSSFCFHV}Let $\pi :\left( {M_{1}}(c_{1},c_{2},c_{3}),g_{1}%
\right) \rightarrow \left( M_{2},g_{2}\right) $ be a Riemannian submersion
from a generalized Sasakian space form onto a Riemannian manifold with $\dim
M_{1}=n=2k+1$ and $\dim M_{2}=m$. If $\dim {\cal V}_{p}=r>2$, $\dim {\cal H}%
_{p}=s>2$, then for any $2$-planes $\Pi \subset {\cal V}_{p}$ and ${\Bbb P}%
\subset {\cal H}_{p}$,

\begin{enumerate}
\item[{\bf (i)}] if $\xi \in {\cal V}_{p}$, then we have%
\begin{eqnarray}
&&\left. \frac{c_{1}}{2}\left( r^{2}+s^{2}+2sr-s-r-4\right) +\frac{3c_{2}}{2}%
\left( \left\Vert Q\right\Vert ^{2}+\left\Vert P\right\Vert ^{2}+2\left\Vert
P^{{\cal V}}\right\Vert ^{2}-2\left( g_{1}\left( V_{1},QV_{2}\right) \right)
^{2}\right. \right.   \nonumber \\
&&\left. \left. -2\left( g_{1}\left( U_{1},PU_{2}\right) \right) ^{2}\right)
-c_{3}\left( r+s-1-\Theta (\Pi )\right) \leq \tau _{{\cal H}}^{{\cal H}}-K_{%
{\cal H}}^{{\cal H}}({\Bbb P})+\tau _{{\cal V}}^{{\cal V}}-K_{{\cal V}}^{%
{\cal V}}(\Pi )\right.   \nonumber \\
&&\left. +\frac{r^{2}\left( r-2\right) }{2\left( r-1\right) }\left\Vert
H\right\Vert ^{2}-\breve{\delta}\left( N\right) +3\sum_{j=3}^{s}\sum_{\alpha
=1}^{r}\left( \left( {\cal A}^{{\cal V}}\right) _{1j}^{\alpha }\right) ^{2}+%
\frac{3}{2}\sum_{i,j=2}^{s}\sum_{\alpha =1}^{r}\left( \left( {\cal A}^{{\cal %
V}}\right) _{ij}^{\alpha }\right) ^{2}\right.   \nonumber \\
&&\left. -\frac{1}{2}\left\Vert {\cal A}^{{\cal H}}\right\Vert ^{2}+\frac{1}{%
2}\left\Vert T^{{\cal V}}\right\Vert ^{2},\right.   \label{eq-VHGSSF-(1)}
\end{eqnarray}

\item[{\bf (ii)}] if $\xi \in {\cal H}_{p}$, then we have%
\begin{eqnarray}
&&\left. \frac{c_{1}}{2}\left( r^{2}+s^{2}+2sr-s-r-4\right) +\frac{3c_{2}}{2}%
\left( \left\Vert Q\right\Vert ^{2}+\left\Vert P\right\Vert ^{2}+2\left\Vert
P^{{\cal V}}\right\Vert ^{2}-2\left( g_{1}\left( V_{1},QV_{2}\right) \right)
^{2}\right. \right.   \nonumber \\
&&\left. \left. -2\left( g_{1}\left( U_{1},PU_{2}\right) \right) ^{2}\right)
-c_{3}\left( s+r-1-{\bf \gamma }\left( {\Bbb P}\right) \right) \leq \tau _{%
{\cal H}}^{{\cal H}}-K_{{\cal H}}^{{\cal H}}({\Bbb P})+\tau _{{\cal V}}^{%
{\cal V}}-K_{{\cal V}}^{{\cal V}}(\Pi )\right.   \nonumber \\
&&\left. +\frac{r^{2}\left( r-2\right) }{2\left( r-1\right) }\left\Vert
H\right\Vert ^{2}-\breve{\delta}\left( N\right) +3\sum_{j=3}^{s}\sum_{\alpha
=1}^{r}\left( \left( {\cal A}^{{\cal V}}\right) _{1j}^{\alpha }\right) ^{2}+%
\frac{3}{2}\sum_{i,j=2}^{s}\sum_{\alpha =1}^{r}\left( \left( {\cal A}^{{\cal %
V}}\right) _{ij}^{\alpha }\right) ^{2}\right.   \nonumber \\
&&\left. -\frac{1}{2}\left\Vert {\cal A}^{{\cal H}}\right\Vert ^{2}+\frac{1}{%
2}\left\Vert T^{{\cal V}}\right\Vert ^{2}.\right.   \label{eq-VHGSSF-(2)}
\end{eqnarray}
\end{enumerate}
The equality holds in {\rm (\ref{eq-VHGSSF-(1)})} and {\rm (\ref%
{eq-VHGSSF-(2)})} if and only if the tensor ${\cal T}^{{\cal H}}$ takes the
form given by {\rm (\ref{eq-TH-matrix-1})} and {\rm (\ref{eq-TH-matrix-2})}.
\end{theorem}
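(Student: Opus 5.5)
The plan is to follow the pattern of Theorems \ref{Theorem RSFCFHV} and \ref{Theorem CSFCFHV}. I will start from the general mixed inequality (\ref{eq-GIVHCF}) of Theorem \ref{Theorem GCFVH}, whose right-hand side already matches the right-hand sides of (\ref{eq-VHGSSF-(1)}) and (\ref{eq-VHGSSF-(2)}). That reduces the statement to evaluating the left-hand side
\[
\tau ^{M_{1}}(p)-K_{{\cal V}}^{M_{1}}(\Pi )-K_{{\cal H}}^{M_{1}}({\Bbb P})
\]
in a generalized Sasakian space form, using the splitting (\ref{eq-TMVH-(1.1)}) of $\tau^{M_1}$ into vertical, horizontal and mixed parts. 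The equality case is then inherited verbatim from Theorem \ref{Theorem GCFVH}. The reason is that only the left-hand side has been replaced, and it has been replaced by an identity, so the form (\ref{eq-TH-matrix-1})--(\ref{eq-TH-matrix-2}) of ${\cal T}^{{\cal H}}$ still characterizes equality.

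For the vertical and horizontal pieces I will reuse what is already computed. In case (i), $\xi\in{\cal V}_p$, I take $\tau_{{\cal V}}^{M_1}$ and $K_{{\cal V}}^{M_1}(\Pi)$ from (\ref{eq-GSSFMI-(1.1)})--(\ref{eq-GSSFMI-(1.2)}), and $\tau_{{\cal H}}^{M_1}$ and $K_{{\cal H}}^{M_1}({\Bbb P})$ from (\ref{eq-scal-GSSF-Hor})--(\ref{eq-sec-GSSF-Hor}). In case (ii), $\xi\in{\cal H}_p$, the same formulas are used with the roles of the two cases swapped.

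The new ingredient is the mixed sum $\sum_{i=1}^{s}\sum_{j=1}^{r}R^{M_1}(U_i,V_j,V_j,U_i)$, which I will evaluate directly from (\ref{eq-GSSF}) and the Kulkarni--Nomizu formula (\ref{eq-KN-product}).
\begin{itemize}
\item The $c_1$ term contributes $c_1 sr$.
\item The $\varphi^\flat$ terms contribute $3c_2\sum_{i,j}g_1(U_i,\varphi V_j)^2=3c_2\|P^{{\cal V}}\|^2$, exactly as $J$ does in (\ref{eqVHCSF-(2)}).
\item The $\eta\otimes\eta$ term contributes $-c_3\sum_{i,j}\big(\eta(U_i)^2+\eta(V_j)^2\big)$. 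This equals $-c_3 s$ when $\xi\in{\cal V}_p$, since then $\sum_j\eta(V_j)^2=1$ and $\eta(U_i)=0$. It equals $-c_3 r$ when $\xi\in{\cal H}_p$.
\end{itemize}

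Adding everything up, the $c_1$ parts combine to
\[
\tfrac{r(r-1)}{2}+\tfrac{s(s-1)}{2}+sr-2=\tfrac12\big(r^2+s^2+2sr-r-s-4\big).
\]
The $c_2$ parts combine to $\tfrac{3c_2}{2}\big(\|Q\|^2+\|P\|^2+2\|P^{\cal V}\|^2-2g_1(V_1,QV_2)^2-2g_1(U_1,PU_2)^2\big)$. For the $c_3$ parts, take case (i):
\begin{itemize}
\item $-(r-1)c_3$ comes from $\tau_{\cal V}^{M_1}$;
\item $-sc_3$ comes from the mixed sum;
\item $+c_3\Theta(\Pi)$ comes from $-K_{\cal V}^{M_1}(\Pi)$.
\end{itemize}
Together these give $-c_3(r+s-1-\Theta(\Pi))$. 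Case (ii) is symmetric and gives $-c_3(s+r-1-\gamma({\Bbb P}))$. Substituting into (\ref{eq-GIVHCF}) yields (\ref{eq-VHGSSF-(1)}) and (\ref{eq-VHGSSF-(2)}).

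The main obstacle is bookkeeping of the $\eta$ contributions in the mixed sum and in the sectional curvatures. I will double-check the sign conventions of $g\circledast(\eta\otimes\eta)$ on pairs $(U_i,V_j)$, and use orthogonality of $\xi$ to the complementary distribution so that the cross terms $\eta(U_i)\eta(V_j)$ vanish.
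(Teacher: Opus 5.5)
Your proposal is correct and is essentially the paper's own proof: the paper also evaluates the mixed sum from (\ref{eq-GSSF}) as $c_1sr+3c_2\Vert P^{{\cal V}}\Vert^2-c_3 s$ (resp.\ $-c_3 r$ when $\xi\in{\cal H}_p$), see (\ref{eq-VHGSSF-(6)}), combines it with (\ref{eq-GSSFMI-(1.1)}), (\ref{eq-GSSFMI-(1.2)}), (\ref{eq-scal-GSSF-Hor}) and (\ref{eq-sec-GSSF-Hor}), substitutes into (\ref{eq-GIVHCF}), and takes the equality case from Theorem~\ref{Theorem GCFVH}. Your collection of the $c_1$, $c_2$ and $c_3$ terms gives exactly the constants in (\ref{eq-VHGSSF-(1)}) and (\ref{eq-VHGSSF-(2)}).
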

\begin{proof}
By (\ref{eq-GSSF}), we obtain 
\begin{equation}
\sum_{i=1}^{s}\sum_{j=1}^{r}R^{M_{1}}\left( U_{i},V_{j},V_{j},U_{i}\right)
=\left\{ 
\begin{array}{ll}
c_{1}sr+3c_{2}\left\Vert P^{{\cal V}}\right\Vert ^{2}-c_{3}s & {\rm if}~\xi
\in {\cal V}_{p}; \\ 
c_{1}sr+3c_{2}\left\Vert P^{{\cal V}}\right\Vert ^{2}-c_{3}r & {\rm if}~\xi
\in {\rm {\cal H}}_{p}.%
\end{array}%
\right.  \label{eq-VHGSSF-(6)}
\end{equation}%
In view of (\ref{eq-GSSFMI-(1.1)}), (\ref{eq-GSSFMI-(1.2)}), (\ref%
{eq-scal-GSSF-Hor}), (\ref{eq-sec-GSSF-Hor}), (\ref{eq-GIVHCF}) and (\ref%
{eq-VHGSSF-(6)}), we get (\ref{eq-VHGSSF-(1)}) and (\ref{eq-VHGSSF-(2)}). 
\end{proof}
\begin{remark}
In {\rm Example \ref{Exa-GSSF}}, we see that the map $\pi $ satisfies the
assumptions of {\rm Theorem~\ref{Theorem GSSFCFHV}}. Moreover, the
inequality given in {\rm Theorem \ref{Theorem GSSFCFHV}} attains equality in
this example.
\end{remark}

\end{document}